\renewenvironment{proof}[1][\proofname]{{\bfseries #1.} }{\qed}
\def\Cov{{\rm Cov\,}}
\newcommand{\field}[1]{\mathbb{#1}}
\newcommand{\R}{\field{R}}
\newcommand{\Rd}{\field{R}^d}
\newcommand{\N}{\field{N}}
\newcommand{\Z}{\field{Z}}
\newcommand{\Var}{{\rm Var}}
\newcommand{\Corr}{{\rm Corr}}
\newcommand{\eps}{\varepsilon}
\newcommand{\Tb}{{\mathbb{T}}}
\newcommand{\Lc}{{\mathcal{L}}}
\newcommand{\var}{\operatorname{Var}}
\newcommand{\Sc}{\mathcal{S}}
\newcommand{\Nc}{\mathcal{N}}
\def\authors#1{{ \begin{center} #1 \vspace{0pt} \end{center} } \smallskip}
\def\institution#1{{\sl \begin{center} #1 \vspace{0pt} \end{center} } }
\def\inst#1{\unskip $^{#1}$}
\def\title#1{{\huge\bf  \begin{center} #1 \vspace{0pt} \end{center}  } \smallskip}
\def\E{{\mathbb{ E}}}
\def\P{{\mathbb{P}}}
\newtheorem{theorem}{Theorem}[section]
\newtheorem{proposition}[theorem]{Proposition}
\newtheorem{lemma}[theorem]{Lemma}
\newtheorem{definition}[theorem]{Definition}
\newtheorem{remark}[theorem]{Remark}
\renewcommand{\(}{\left(}
\renewcommand{\)}{\right)}
\renewcommand{\[}{\left[}
\renewcommand{\]}{\right]}
\renewcommand{\{}{\left\lbrace}
\renewcommand{\}}{\right\rbrace}
\newcommand{\abs}[1]{\left\lvert#1\right\rvert}
\def\to{\longrightarrow}
\def\sto{\rightarrow}
\def\wh{\widehat}
\def\wt{\widetilde}
\newcommand{\ds}{\displaystyle}
\DeclareMathOperator{\Ima}{Im}
\begin{document}

\date{Jun 2020}

\title{\sc Non-Universal Moderate Deviation Principle for the Nodal Length of Arithmetic Random Waves}
\authors{\large Claudio Macci\inst{1}, Maurizia Rossi\inst{2}, Anna Vidotto\inst{3}}
\institution{\inst{1}Dipartimento di Matematica, Universit\`a di Roma ``Tor Vergata"\\
\inst{2}Dipartimento di Matematica e Applicazioni, Universit\`a di Milano-Bicocca\\
\inst{3}Dipartimento di Matematica e Applicazioni, Università degli Studi di Napoli Federico II}

\begin{abstract}

Inspired by the recent work \cite{MRT:21}, we prove a non-universal non-central Moderate Deviation principle for the nodal length of arithmetic random waves (Gaussian Laplace eigenfunctions on the standard flat torus) both on the whole manifold and on shrinking toral domains. Second order fluctuations for the latter were established in \cite{MPRW:16} and \cite{BMW:20} respectively, by means of chaotic expansions, number theoretical estimates and full correlation phenomena. Our proof is simple and relies on the interplay between the long memory behavior of arithmetic random waves and the chaotic expansion of the nodal length, as well as on well-known techniques in Large Deviation theory (the contraction principle and the concept of exponential equivalence).

\smallskip

\noindent {\sc Keywords and Phrases:} Arithmetic Random Waves; Nodal length; Moderate Deviation
Principle; Wiener chaos.

\smallskip

\noindent {\sc AMS Classification:} 60G60; 60F10; 58J50.

\end{abstract}

%\tableofcontents

\section{Introduction} %and main results}

In recent years there has been a growing interest for the geometry of random waves motivated by theoretical issues in Geometric Analysis, Mathematical Physics, Probability, Number Theory (see among others \cite{Ya:82, Be:77,Be:02, Wi:10, KKW:13}) as well as applications, for instance in Cosmology, Climate Science and  Brain Imaging (see for instance \cite{MP:11,Ch:05, CMV:23, TW:07}). 

In 1982 Yau \cite{Ya:82} conjectured that the nodal volume, i.e., the measure of the zero locus, which is a smooth (hyper)surface outside of a codimension-two singular set, of \emph{any} Laplacian eigenfunction $f$ on a closed $C^\infty$-smooth Riemannian manifold $(M,g)$ is comparable to the square root of the corresponding eigenvalue $E$. More precisely,  
\begin{equation}\label{yau}
c\sqrt{E}\le \textnormal{Vol}_g(f^{-1}(0))\le C\sqrt{E},
\end{equation}
for some constants $C,c>0$ depending only on the manifold.
In 1978 Br\"uning \cite{Br:78} proved the lower bound in \eqref{yau} in dimension two (for the length of nodal lines),  while for real analytic metrics in any dimension this conjecture was settled by Donnelly and Fefferman \cite{DF:88} ten years later.  Recently, Logunov \cite{Lo:18} established the lower bound in the smooth case in any dimension, while the optimal upper estimate in \eqref{yau} is still an open problem in full generality. 

\

Inspired by the work by Kac on zeros of random polynomials, in 1985 B\'erard \cite{Be:85} proposed to investigate  \emph{random} eigenfunctions: for compact symmetric spaces of rank one (the round sphere e.g.) he suggested to make use of the multiplicities in the spectrum of the Laplacian in order to endow the eigenspace (which is a \emph{finite} dimensional vector space),
%\footnote{It is a finite-dimensional vector space.}
say of eigenvalue $E$, with a Gaussian measure and computed the \emph{mean} nodal volume. Consistently with Yau's conjecture, it turned out to be proportional to $\sqrt{E}$ by a constant factor (the volume of the manifold times an explicit constant that only depends on the dimension of the space). 
%For the two-dimensional unit sphere with the round metric, in 2010 Wigman proved a concentration result around the mean, yielding some information on the constants in \eqref{yau}. 
Since then, several authors investigated geometric properties of random eigenfunctions, motivated also by Berry's ansatz \cite{Be:77} on universality of high-energy eigenfunctions for ``generic'' classically chaotic billiards. 

In 2008 Rudnick and Wigman \cite{RW:08}, inspired by B\'erard's model, considered random eigenfunctions on the standard two-dimensional flat torus $\mathbb T^2:=\mathbb R^2/\mathbb Z^2$, the so-called \emph{arithmetic} random waves. In this case the spectral degeneracy properties are related to the ``sums of squares'' problem, see e.g. \cite{BB:14}. Indeed, the eigenvalues (or energy levels)
of the Laplace-Beltrami operator of $\mathbb T^2$ are all numbers $E=E_n:=4\pi^2n$ where $n$ is a sum of two squares, namely 
\begin{equation}\label{squares}
n=a^2 + b^2,\quad a,b\in \mathbb Z,
\end{equation} 
and the multiplicity of $E_n$, say $\mathcal N_n$, coincides with the number of pairs  $(a,b)\in \mathbb Z^2$ lying on the circle of radius $\sqrt{n}$. Landau's Theorem \cite{La:08} ensures that $\mathcal N_n$ grows on average as $\sqrt{\log n}$, however it could be as small as $8$ or as big as a power of $\log n$, depending on the chosen subsequence of energy levels. 

In order to go deeply into the erratic behavior of lattice points, we define the atomic probability measure $\mu_n$ on the unit circle by projecting points with coordinates $(a,b)\in \mathbb Z^2$ satisfying \eqref{squares}, attaching to them a Dirac mass and then averaging. It is well known that there exists a density-$1$ sequence of eigenvalues for which $\lbrace \mu_n\rbrace_n$ converges to the uniform probability measure on the unit circle. (In this case, the corresponding \emph{pointwise} scaling limit of the covariance kernel of arithmetic random waves is indeed those of Berry's random wave model \cite{Be:77, Be:02}, i.e., the Bessel function of the first kind of order zero.)
However, there are other weak-* limits classified in \cite{KW:17}. In particular, for every $\eta\in [-1,1]$ there exists a subsequence of eigenvalues whose corresponding sequence of probability measures $\lbrace \mu_n\rbrace_n$ is such that 
\begin{equation}\label{4fourier}
\widehat{\mu_n}(4) \to \eta,\qquad \textnormal{ as } \mathcal N_n\sto +\infty,
\end{equation}
where $\widehat{\mu_n}(4)$ denotes the $4$-th Fourier coefficient of $\mu_n$. Note that when the scaling limit is Berry's model, then $\eta=0$.

For the nodal lines of the $n$-th arithmetic random wave, Rudnick and Wigman proved the length $\Lc_n$ to be on \emph{average} as big as $\sqrt{E_n}$, the square root of the corresponding eigenvalue (analogously to the spherical case), in accordance with Yau's conjecture. More precisely,
\begin{equation}\label{media}
    \mathbb E[\mathcal L_n] = \frac{1}{2\sqrt 2} \sqrt{E_n}. 
\end{equation}
The sharp variance, for large eigenvalues (actually as $\mathcal N_n\sto +\infty$) has been found by Khrishnapur, Kurlberg and Wigman in 2013 \cite{KKW:13} to be
\begin{equation}\label{varianza}
\Var(\mathcal L_n) \sim \frac{1+\widehat{\mu_n}(4)^2}{512} \frac{E_n}{\mathcal N_n^2}.
\end{equation}
Remarkably, its behavior turns out to be non-universal, depending on fine properties of lattice points.
In particular, \eqref{media} and \eqref{varianza} together with Markov inequality entail concentration of the nodal length around its mean, giving some information on the constants in \eqref{yau}: as $\mathcal N_n\sto +\infty$,
$
    \mathcal L_n/\sqrt{E_n}\mathop{\to}^{\mathbb P} 1/2\sqrt 2$,
see Section \ref{sec-KKW} for more details. 

Regarding second order fluctuations of the nodal length of arithmetic random waves, Marinucci, Peccati, Rossi and Wigman \cite{MPRW:16} proved a non-universal non-central limit theorem: as $\mathcal N_n\sto +\infty$ s.t. \eqref{4fourier} holds, the limiting distribution of $\mathcal L_n$ is a linear combination of independent chi-square distributed random variables $X_1^2$ and $X_2^2$, whose coefficients depend on \eqref{4fourier}
\begin{equation}\label{conv}
  \frac{\mathcal L_n-\mathbb E[\mathcal L_n]}{\sqrt{\Var(\mathcal L_n)}} \mathop{\to}^d \frac{1}{2\sqrt{1+\eta^2}} (2- (1+\eta)X_1^2 -(1-\eta)X_2^2)=:\mathcal M_\eta,  
\end{equation}
see \eqref{e:r} for a complete discussion. 

In 2020, Benatar, Marinucci and Wigman \cite{BMW:20} studied the nodal length $\Lc_n$ of high-energy arithmetic random waves restricted to \emph{shrinking domains}, all the way down to Planck scale. In particular, they consider centered balls $B(s_n)$ with vanishing radius $s_n$ such that $s_n \sqrt{E_n}\sto +\infty$, see \cite{BMW:20} for precise assumptions. (Indeed, when valid, Berry's random wave model is applicable to shrinking domains of radius $\approx \frac{C}{\sqrt{E}}$ with $C\gg 0$, see the Introduction in \cite{BMW:20}.) 
In this work they found that, up to a natural scaling, the variance of the restricted nodal length $\mathcal L_{n;s_n}$ obeys the same asymptotic law as the total nodal length $\mathcal L_n$, and remarkably global and local nodal lengths are asymptotically fully correlated. In particular this implies $\mathcal L_{n;s_n}$ to exhibit the same limiting behavior as $\mathcal L_n$ in \eqref{conv}.

\ 

The present paper is installed within these results as a refinement of the non-central limit theorems for both total and local nodal length of arithmetic random waves established respectively in \cite{MPRW:16} and \cite{BMW:20}.
We are inspired by \cite{MRT:21} where the authors investigate the asymptotic behavior of the nodal length for random Laplace eigenfunctions on the unit two-dimensional round sphere, see Section \ref{sec-MR} for a complete discussion. 

In the toral case, we are able to prove a non-universal non-central Moderate Deviation Principle (MDP) -- see Section \ref{sub:MDPs} for the definition -- collected in Theorems \ref{MR} and \ref{MR-shrinking} with an explicit rate function depending on \eqref{4fourier}. In particular, we quantify at a logarithmic scale the asymptotic probability of \emph{rare} events such as $
\mathbb P \Big ((\mathcal L_n-\mathbb E[\mathcal L_n])/\sqrt{\Var(\mathcal L_n)} \le -y\cdot \alpha_n \Big )$,
where $y > 0$ and $\lbrace \alpha_n\rbrace_n$ is a sequence of positive numbers going to infinity slowly enough, in accordance with the growth of $\mathcal N_n$.
Our analysis provides further information on the nodal length of Laplacian eigenfunctions on the torus; indeed, our Theorem \ref{MR} implies in particular 
\begin{equation}\label{est1}
\mathbb P \Big (\mathcal L_n \le \mathbb E[\mathcal L_n] - y \cdot\alpha_n\sqrt{\Var(\mathcal L_n)}   \Big ) = e^{ -\alpha_n \Big(y  \frac{\sqrt{1 + \eta^2}}{1+|\eta|} + o(1)\Big )},
\end{equation}
as $\mathcal N_n\sto+\infty$, where mean and variance are as in \eqref{media} and \eqref{varianza} respectively. 
A result similar to \eqref{est1} holds for the local nodal length by applying Theorem \ref{MR-shrinking}. For future research, it would be interesting to prove \emph{exponential} concentration of $\mathcal L_n$ around its mean, namely a Large Deviation principle for the nodal length of random eigenfunctions at least in the toral and spherical case.

\paragraph{Plan of the paper.} 
Section \ref{sec-BN} is devoted to background and notation needed in order to understand the  model of interest and the goal of the this paper: we first define arithmetic random waves and present prior results on their nodal length, both on the whole torus and in geodesic balls, and then we recall basic notions in Large Deviation theory. In Section \ref{sec-MR} we formulate our main results, i.e., Theorems \ref{MR} and \ref{MR-shrinking}, together with a brief outline of their proofs which allows us to make a comparison with the spherical case. The last sections are devoted to the detailed proofs: our argument relies on the interplay between the long memory behavior of arithmetic random waves and the chaotic expansion of the nodal length (Section \ref{secWiener}), as well as on well-known techniques in Large Deviation theory such as the contraction principle and the concept of exponential equivalence (Section \ref{sec-proofs}).

\section{Background and notation}\label{sec-BN}

In this section we introduce our random model and recall some
basic notions in Large Deviation theory, eventually describing our main results. 

\emph{Some conventions.} Given two sequences of positive numbers $\lbrace a_n\rbrace_n$ and $\lbrace b_n\rbrace_n$, we write $a_n \vee b_n$ to indicate the maximum between $a_n$ and $b_n$, $a_n=o(b_n)$ if $a_n/b_n \sto 0$ as $n\sto +\infty$, $a_n = O(b_n)$ or equivalently $a_n \ll b_n$  if $a_n/b_n$ is asymptotically bounded and $a_n \sim b_n$ if $a_n/b_n \sto 1$ as $n\sto +\infty$. Moreover, for random variables $\lbrace X_n\rbrace_n$, $X$ and $Y$ we write  $X \stackrel{d}{=} Y$ if $X$ and $Y$ share the same law, and finally $X_n \stackrel{d}{\to} X$ if the sequence $X_n$ converges to $X$ in distribution.

\subsection{Arithmetic random waves}

Let $\Tb^2:=\R^2/\Z^2$ be the standard
two-dimensional flat torus and $\Delta$ the Laplacian on $\Tb^2$. The eigenvalues $E\in \mathbb R$ 
of the Helmholtz equation
\begin{equation}
\label{eq:Schrodinger}
\Delta f + Ef=0
\end{equation}
are all numbers of the form $E_{n}=4\pi^{2}n$ with $n\in S$, where 
$$S=\{{{} n \in \Z : n} =  a^2+b^2 \,\,  \mbox{{} for some} \:a,b\in\Z\}.$$ 
In order to describe Laplace eigenspaces, for $n\in S$ we denote by
$\Lambda_n$ the set of frequencies
\begin{equation*}
\Lambda_n := \lbrace \lambda =(\lambda_1,\lambda_2)\in \Z^2 : \lambda_1^2 + \lambda_2^2 = n\rbrace\
\end{equation*}
whose cardinality $
\mathcal N_n := |\Lambda_n|
$
equals the number of ways to express $n$ as a sum of two square integers.
For $\lambda\in \Lambda_{n}$, we denote by 
\begin{equation*}
e_{\lambda}(x) := \exp(2\pi i \langle \lambda, x \rangle),\quad x=(x_{1},x_{2})\in\Tb^2,
\end{equation*}
the complex exponential associated to the frequency $\lambda$.
The family
$
\{e_{\lambda}\}_{\lambda\in \Lambda_n}
$
is an $L^{2}$-orthonormal basis of the eigenspace of $-\Delta$ corresponding to the
eigenvalue $E_{n}$. In particular, its dimension 
is
$
\mathcal N_n = |\Lambda_n|
$.
The number
$\Nc_{n}$ grows \cite{La:08} {\em on average}
as $\sqrt{\log{n}}$, but could be as small as $8$ for prime numbers
$p\equiv 1\mod{4}$, or as large as a power of $\log{n}$.

From now on, we assume that every random object considered in this paper is defined on a common complete probability space $(\Omega, \mathcal{F}, \P)$, with $\E$ denoting mathematical expectation with respect to $\P$.
\begin{definition} \label{defarw}
For $n\in S$, the $n$-th {\it arithmetic random wave} 
is the random field 
\begin{equation}\label{defT}
T_n(x):=\frac{1}{\sqrt{\mathcal N_n}}\sum_{ \lambda\in \Lambda_n}a_{\lambda}e_\lambda(x), \quad x\in \Tb^2,
\end{equation}
where the coefficients $a_{\lambda}$ are standard complex-Gaussian random variables verifying the following properties: $a_\lambda$ is stochastically independent of $a_\gamma$ whenever $\gamma \notin \{\lambda, -\lambda\}$, and
$a_{-\lambda}= \overline{a_{\lambda}}$ (ensuring that $T_{n}$ is real-valued).
\end{definition}
Let us define, for $n\in S$ such that $\sqrt{n}$ is not an integer,
\begin{equation}\label{Lambdaplus}
\Lambda_n^+ := \lbrace \lambda =(\lambda_1, \lambda_2) \in \Lambda_n : \lambda_2 >0 \rbrace,
\end{equation}
otherwise $
\Lambda_n^+ := \lbrace \lambda =(\lambda_1, \lambda_2) \in \Lambda_n : \lambda_2 >0 \rbrace \cup \lbrace (\sqrt{n}, 0)\rbrace
$.
We assume that 
$
\lbrace  a_\lambda\rbrace_{\lambda\in \Lambda^+_n, n\in S}
$ is a family of independent random variables, in particular $\lbrace T_n\rbrace_{n\in S}$ is a family of independent random fields.
From \eqref{defT}, $T_n$ is stationary, centered
Gaussian  with covariance function $r_n:\mathbb T^2\to [-1,1]$ 
\begin{equation*}
 \E[T_n(x) T_n(y)] = \frac{1}{\mathcal N_n}
\sum_{\lambda\in \Lambda_n}e_{\lambda}(x-y)=\frac{1}{\mathcal N_n}\sum_{\lambda\in \Lambda_n}\cos\left(2\pi\langle x-y,\lambda \rangle\right )=: r_{n}(x-y),\quad x,y\in\Tb^2.
\end{equation*}
Note that $r_{n}(0)=1$, i.e. $T_{n}$ has unit variance.

\subsection{Nodal length: prior work}

For $n\in S$, the nodal set $T_n^{-1}\lbrace 0\rbrace:=\lbrace x\in \mathbb T^2 : T_n(x)=0\rbrace$ is a.s. a smooth curve on the torus, 
we are interested in the {\it nodal length} of the random
eigenfunctions, i.e. the collection $\{\Lc_{n}\}_{n\in S}$ of all random variables of the form
\begin{equation}\label{e:length}
\Lc_n := \text{length}(T_n^{-1}\lbrace 0 \rbrace).
\end{equation}

\subsubsection{Mean and variance}\label{sec-KKW}

The authors of \cite{RW:08} computed the expected value of $\Lc_{n}$:
\begin{equation}\label{mediaL}
\E[\mathcal L_n]= \frac{1}{2\sqrt{2}}\sqrt{E_n},
\end{equation}
which is well reflecting Yau's conjecture \cite{Ya:82}.
A bound for the variance of $\Lc_{n}$ was obtained in  \cite{RW:08}, but the challenging task of attaining an exact asymptotic behaviour for $\var(\Lc_{n})$ was completely settled in \cite{KKW:13} as follows.
Let $\Sc^{1}\subset\R^{2}$ be the unit circle and set, for $n\in S$, $\mu_{n}$ to be the probability measure on $\Sc^{1}$ defined as
\begin{equation*}
\mu_{n} := \frac{1}{\mathcal N_n} \sum_{\lambda\in \Lambda_n} \delta_{\frac{\lambda}{\sqrt{n}}}.
\end{equation*}
Thanks to \cite{EH:99}, we know that there exists a density-$1$ subsequence $\{n_{j}\}_j\subseteq S$ such that
\begin{equation}
\label{eq:mun equidist}
\mu_{n_{j}}\Rightarrow \frac{d\phi}{2\pi}\,,
\end{equation}
namely $\mu_{n_{j}}$ converges to the uniform measure on the unit circle (where $\Rightarrow$ denotes weak-$*$ convergence of probability measures, and $d\phi$ is the Lebesgue measure on $\Sc^{1}$). Nevertheless, the sequence $\{\mu_{n}\}_{n\in S}$
has an infinity of other weak-$*$ adherent points, see \cite{Ci:93,KKW:13}, which are the so-called {\it attainable measures}, see \cite{KW:17, Sar18}. 

Finally, we can state the main result of \cite{KKW:13}, which  is the following: as $\mathcal N_n\sto +\infty$,
\begin{equation}
\label{eq:var leading KKW}
\var(\Lc_{n}) =c_n \frac{E_n}{\Nc_{n}^2}\left (1 + O\left ( \frac{1}{\mathcal N_n^{1/2}}\right ) \right ),\quad c_n = c(\mu_n) := \frac{1+\widehat{\mu}_n(4)^2}{512},
\end{equation}
where $\widehat \mu(k) = \int_{\Sc^{1}} z^{-k}\,d\mu(z)$, $k\in \mathbb{Z}$,
represent the Fourier coefficients of a measure $\mu$ on the unit circle.
Moreover, from \eqref{eq:var leading KKW} it is clear that, in order for $\Lc_{n}$ to exhibit an
asymptotic law 
one has to consider a subsequence $\{ n_{j} \}_j\subseteq S$ such that the limit
$\lim_{j\rightarrow\infty}| \widehat{\mu}_{n_{j}}(4)|$ exists. Indeed,
if $\{ n_{j}\}_j \subseteq S$  is a subsequence such that $\Nc_{n_{j}}\sto\infty$ and $\mu_{n_{j}}\Rightarrow\mu$ for some probability measure $\mu$ on $\Sc^{1}$, then  
\begin{equation}\label{eq:var leading KKW2}
\var(\Lc_{n_j})\sim c({ \mu}) \frac{E_{n_j}}{\Nc_{n_j}^2}, \quad c(\mu) := \frac{1+\widehat{\mu}(4)^2}{512}.
\end{equation} 
Thanks to \cite{KKW:13,KW:17}, we know that for every $\eta \in [-1,1]$
there exists a subsequence $\{ n_{j}\}_j \subseteq S$ such that, as $j\sto\infty$, $\Nc_{n_{j}}\sto\infty$ and
$$
\widehat{\mu}_{n_j}(4)\to\eta\,.
$$
As a consequence, the possible values of the ``asymptotic"
constant $c(\mu)$ cover the whole interval
$\left[\frac{1}{512},\frac{1}{256}\right]$.
In particular, for the full density subsequence $\{n_{j}\}_{j}\subseteq S$ such that the lattice points in $\Lambda_{n_{j}}$ are asymptotically equidistributed \eqref{eq:mun equidist}, $\widehat{\mu}_{n_j}(4) \rightarrow 0$. 
On the  other hand, the work by \cite{Ci:93} has shown that there are 
\emph{thin} (i.e., with density equal to zero) sequences $\{n_{j}\}_{j}\subseteq S$, with $\Nc_{n_{j}}\sto\infty$, such that $\mu_{n_j}$ converges weakly to an atomic probability measure supported at the four symmetric points $\pm1$, $\pm i$; hence, $\widehat{\mu}_{n_{j}}(4)^2\sto1$ and $c_{n_j}\sto 1/256$.

%aggiungere commento sul limite per $\abs{\eta}=1$
%On the other hand, $\eta=1$

\subsubsection{Asymptotic distribution}\label{sec-MPRW}

From now on, for a (non-zero) finite variance random variable $X$, we set $\widetilde X$ to be its normalized version, that is
$$
\widetilde X:=\frac{X-\E[X]}{\sqrt{\Var(X)}}\,.
$$
Fix $\eta\in [-1,1]$  and let $\mathcal{M}_\eta$ be the random variable
\begin{equation}\label{e:r}
\mathcal{M}_\eta := \frac{1}{2\sqrt{1+\eta^2}} (2 - (1+\eta) X_1^2-(1-\eta) X_2^2),
\end{equation}
where $X_{1},X_{2}$ are independent standard Gaussians. Note that 
$$
\mathcal M_\eta \mathop{=}^{\rm d} \mathcal M_{-\eta},
$$ moreover $\mathcal{M}_\eta$ is not Gaussian, indeed its support is $\left (-\infty, 1/\sqrt{1+\eta^2} \right ]$. Except for $\eta_2 = - \eta_1$,  $\mathcal M_{\eta_1}$ and $\mathcal M_{\eta_2}$ have different laws if $\eta_1\ne \eta_2$.

\begin{theorem}[Theorem 1.1, \cite{MPRW:16}]
\label{thm:lim dist sep}

Let $\{ n_{j} \}_j\subseteq S$ be a subsequence of $S$ satisfying $\mathcal N_{n_{j}}\sto\infty$, such that the sequence
$\{\big|\widehat{\mu}_{n_j}(4)\big|\}_j$ converges, that is:
\begin{equation}\label{eta-thm}
|\widehat{\mu}_{n_j}(4)\big|\rightarrow \eta,
\end{equation}
 for some $\eta \in [0,1]$.
Then
\begin{equation}
\label{eq:Lctild->Meta}
\widetilde{\Lc}_{n_j}  \stackrel{\rm d}{\longrightarrow} \mathcal{M}_\eta,
\end{equation}
where $\mathcal{M}_\eta$ is defined as in \eqref{e:r}.
\end{theorem}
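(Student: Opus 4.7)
The plan is to represent the nodal length via its Wiener--It\^o chaotic decomposition, show that the leading fluctuation comes from the fourth chaos, and identify the limit as $\mathcal M_\eta$. Starting from the Kac--Rice formula,
$$\mathcal L_n = \int_{\mathbb T^2} \delta_0(T_n(x))\, \|\nabla T_n(x)\|\, dx,$$
interpreted as the $L^2(\mathbb P)$-limit of the obvious $\varepsilon$-approximations, one expands both $\delta_0(\cdot)$ and the Euclidean norm $\|\cdot\|$ in Hermite polynomials of the standardised Gaussian vector $\bigl(T_n(x),\, \partial_1 T_n(x)/\sqrt{2\pi^2 n},\, \partial_2 T_n(x)/\sqrt{2\pi^2 n}\bigr)$. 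Integrating term-by-term and exploiting the symmetry $T_n \stackrel{d}{=} -T_n$ (which annihilates all odd chaoses), one arrives at
$$\mathcal L_n = \mathbb E[\mathcal L_n] + \sum_{q\geq 1}\mathcal L_n[2q],$$
where $\mathcal L_n[2q]$ denotes the orthogonal projection of $\mathcal L_n$ onto the $(2q)$-th Wiener chaos generated by the Gaussian family $\{a_\lambda\}_{\lambda\in\Lambda_n^+}$.

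The cornerstone is then \emph{Berry's cancellation on the torus}: the second chaotic component $\mathcal L_n[2]$ vanishes identically. This follows from the explicit Hermite coefficients of $\delta_0(y)\|z\|$ combined with the elementary arithmetic identities $\sum_{\lambda\in\Lambda_n}\lambda_i^2 = n\mathcal N_n/2$ and $\sum_{\lambda\in\Lambda_n}\lambda_1\lambda_2 = 0$ that govern the covariance of $\nabla T_n$. Consequently the fourth chaos carries the second-order fluctuations, and one checks, using the bounds on correlation sums of lattice points already developed in \cite{KKW:13,MPRW:16}, that
$$\Var(\mathcal L_n[4])\sim \frac{1+\widehat{\mu}_n(4)^2}{512}\cdot\frac{E_n}{\mathcal N_n^2}, \qquad \sum_{q\geq 3}\Var(\mathcal L_n[2q]) = o\!\left(\frac{E_n}{\mathcal N_n^2}\right).$$
Together with \eqref{eq:var leading KKW} this yields $\widetilde{\mathcal L}_{n_j} = \mathcal L_{n_j}[4]/\sqrt{\Var(\mathcal L_{n_j})} + o_{\mathbb P}(1)$, reducing the theorem to the convergence in distribution of the normalised fourth chaos.

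The final step is to compute $\mathcal L_n[4]$ explicitly: one gets a quartic polynomial in $\{a_\lambda\}$ which, after reorganising by the diagonal constraint $\lambda+\lambda'=\mu+\mu'$, takes the form of (a constant multiple of)
$$2 - (1+A_n)Y_1(n)^2 - (1-A_n)Y_2(n)^2,$$
where $A_n$ is driven by $\widehat{\mu}_n(4)$ and $(Y_1(n),Y_2(n))$ is a centred Gaussian pair whose covariance matrix converges to the identity as $\mathcal N_n\to\infty$. Under \eqref{eta-thm} one has $A_{n_j}\to \eta$ and $(Y_1(n_j),Y_2(n_j))\stackrel{d}{\to}(X_1,X_2)$ with $X_1,X_2$ i.i.d.\ standard Gaussians, so the continuous mapping theorem plus Slutsky deliver \eqref{eq:Lctild->Meta} with $\mathcal M_\eta$ as in \eqref{e:r}. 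The main obstacle is precisely this last identification: checking joint Gaussianity of $(Y_1(n_j),Y_2(n_j))$ in the limit, and showing that their limiting cross-covariance is \emph{exactly} $\eta$, demands delicate equidistribution input on the quadruples $(\lambda,\lambda',\mu,\mu')\in\Lambda_{n_j}^4$ with $\lambda+\lambda'=\mu+\mu'$, so that only the ``diagonal'' contributions survive in the variance and the coefficient $|\widehat{\mu}_{n_j}(4)|\to\eta$ genuinely governs the limiting law.
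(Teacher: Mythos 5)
Your outline follows the same route as the proof in \cite{MPRW:16} that this paper recalls in Section \ref{secWiener}: Wiener--It\^o chaos expansion, vanishing of odd chaoses and of $\mathcal L_n[2]$ (Berry's cancellation), dominance of the fourth chaos in variance together with negligibility of the tail, the explicit quartic form of $\mathcal L_n[4]$ in terms of the vector $W(n)$ of quadratic forms in the $a_\lambda$'s (Lemma \ref{p:formula4} and the identity $W_1=W_2+W_3$ giving $-(W_2-W_3)^2-4W_4^2+R(n)$), the CLT of Lemma \ref{p:clt}, and continuous mapping plus Slutsky. One small inconsistency worth flagging at the end of your sketch: if $(Y_1(n_j),Y_2(n_j))$ converges to an i.i.d.\ standard Gaussian pair, as you correctly assert, then its limiting cross-covariance is $0$, not $\eta$; in the reduction actually used ($Y_1\propto W_2-W_3$, $Y_2\propto W_4$, which are asymptotically uncorrelated) the parameter $\eta$ enters through the asymmetric limiting variances $\tfrac{1+\eta}{2}$ and $\tfrac{1-\eta}{2}$ that you absorb into the coefficients $1\pm A_n$, not through a cross-correlation.
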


In \cite{PR:18} a quantitative version of Theorem \ref{thm:lim dist sep} has been proved.
Let us recall the definition of 1-Wasserstein distance (see $\text{e.g.}$ \cite[\S C]{NP:12} and the references therein): given two random variables $X, Y$ whose laws are $\mu_X$ and $\mu_Y$, respectively, the Wasserstein distance between $\mu_X$ and $\mu_Y$,
written $d_{\text{W}}(X,Y)$, is defined as
\begin{equation*}\label{d def gen2}
d_{\text{W}}\left(X, Y\right):= \inf_{(A,B)}\mathbb E\left[ \left | A-B \right |  \right ],
\end{equation*}
 where the infimum runs over all pairs of random variables $(A,B)$ with marginal laws $\mu_X $ and $\mu_Y$, respectively. We will mainly use the dual representation 
\begin{equation}\label{d def gen}
d_{\text{W}}\left(X, Y\right) = \sup_{h\in \mathcal H_1} \left |\mathbb E\left[h(X) - h(Y)\right]\right |,
\end{equation}
where $\mathcal H_1$ denotes the class of Lipschitz functions $h:\mathbb R\to \mathbb R$ whose Lipschitz constant is less or equal than $1$. Relation \eqref{d def gen} implies in particular that, if $d_{\rm W}(X_n, X)\to 0$, then 
$\displaystyle{X_n \mathop{\to}^d  X}$ (the converse implication is false in general).
\begin{theorem}[Theorem 2, \cite{PR:18}]
\label{mainth1}
Let $\{ n_{j} \}_j\subseteq S$ be a subsequence of $S$ satisfying $\mathcal N_{n_{j}}\rightarrow\infty$, such that the sequence
$\{\big|\widehat{\mu}_{n_j}(4)\big| \}_j$ converges, that is:
$$|\widehat{\mu}_{n_j}(4)\big|\rightarrow \eta,$$ for some $\eta \in [0,1]$.
Then
\begin{equation}\label{maineq}
d_{\rm W}\left(\widetilde {\mathcal L}_{n_j}, \mathcal M_\eta \right) \ll  \mathcal N_{n_j}^{-1/4}\,   \vee \,  \left | \left |\widehat{\mu}_{n_j}(4) \right | - \eta \right |^{1/2}.
\end{equation}
\end{theorem}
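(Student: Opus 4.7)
The strategy combines the Wiener chaos decomposition of the nodal length with a quantitative analysis of its dominant fourth-chaos projection, followed by a continuity-type estimate in $\eta$, the three being combined by the triangle inequality for $d_W$. Write
$$\Lc_{n_j} - \E[\Lc_{n_j}] = \sum_{q \geq 1} \Lc_{n_j}[2q],$$
where $\Lc_{n_j}[2q]$ denotes the projection onto the $2q$-th Wiener chaos associated with $\{a_\lambda\}_{\lambda \in \Lambda_{n_j}^+}$; only even chaoses survive since $\Lc_{n_j}$ is an even functional of $T_{n_j}$. Berry's cancellation yields $\Lc_{n_j}[2] \equiv 0$, and the fine variance analysis of \cite{KKW:13,MPRW:16} shows that
$$\var\Big(\sum_{q \geq 3} \Lc_{n_j}[2q]\Big) \ll \frac{E_{n_j}}{\Nc_{n_j}^{5/2}} = o\big(\var(\Lc_{n_j})\big).$$
Since $d_W$ is dominated by the $L^2$-norm of a natural coupling, this yields
$$d_W\Big(\lenj,\; \Lc_{n_j}[4]/\sqrt{\var(\Lc_{n_j})}\Big) \ll \Nc_{n_j}^{-1/4}.$$

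Next, $\Lc_{n_j}[4]$ admits an explicit Hermite expansion in $\{a_\lambda\}$, indexed by 4-tuples of frequencies $(\lambda_1,\lambda_2,\lambda_3,\lambda_4) \in \Lambda_{n_j}^4$ with $\lambda_1+\cdots+\lambda_4 = 0$. The ``diagonal'' contribution (pairings $\lambda_2 = -\lambda_1$, $\lambda_4 = -\lambda_3$ and their permutations) produces, after normalization by $\sqrt{\var(\Lc_{n_j})}$, a random variable of the shape
$$\frac{1}{2\sqrt{1 + \widehat{\mu}_{n_j}(4)^2}}\Big(2 - (1+\munj)\, Y_{n_j,1} - (1-\munj)\, Y_{n_j,2}\Big),$$
where each $Y_{n_j,i}$ is an empirical average of i.i.d.\ chi-square random variables that converges in $L^2$ to a standard $X_i^2$ at rate $\Nc_{n_j}^{-1/2}$. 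The remaining ``off-diagonal'' pairings are negligible in $L^2$, controlled via arithmetic counts for lattice points on the circle of radius $\sqrt{n_j}$ (as in \cite{KKW:13}). Altogether,
$$d_W\Big(\Lc_{n_j}[4]/\sqrt{\var(\Lc_{n_j})},\; \menj\Big) \ll \Nc_{n_j}^{-1/4}.$$

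Finally, a quantitative comparison between $\menj$ and $\Mc_\eta$, using the explicit form \eqref{e:r} together with a Cauchy--Schwarz bound applied to the Lipschitz-dual representation \eqref{d def gen} (balancing the sub-exponential tails of the squared Gaussians $X_i^2$ against the $1$-Lipschitz test class), produces
$$d_W\big(\menj,\; \Mc_\eta\big) \ll \big|\munj - \eta\big|^{1/2}.$$
Assembling the three estimates by the triangle inequality for $d_W$ yields \eqref{maineq}. The principal obstacle lies in the quantitative analysis of Step 2: one must show that contributions to $\Lc_{n_j}[4]$ from frequency 4-tuples with non-canonical pairings are negligible at rate $\Nc_{n_j}^{-1/2}$ in variance, which rests on non-trivial arithmetic estimates for correlation sums of lattice points on the circle of radius $\sqrt{n_j}$, extending the counts used in \cite{KKW:13}.
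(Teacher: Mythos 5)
The paper does not prove this result itself; it is cited verbatim from \cite{PR:18}. Your reconstruction gets the outer triangle-inequality skeleton right (tail of the chaos expansion in $L^2$, a quantitative approximation of the fourth chaos, a continuity-in-$\eta$ estimate), and Step~1 is correct: combining the bound $\E[|\Lc_n-\Lc_n[4]|^2]=O(E_n/\Nc_n^{5/2})$ with $\var(\Lc_n)\asymp E_n/\Nc_n^2$ and the inequality $d_W\le\|\cdot\|_{L^2}$ does give $\Nc_n^{-1/4}$ for the truncation error.

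However, Step~2 contains a substantive gap and a misstatement of the mechanism. After reducing $\Lc_{n}[4]$ to the quadratic form in $W(n)=(W_1(n),\dots,W_4(n))$ plus the remainder $R(n)$ (Lemma~\ref{p:formula4}), the dominant contribution is $W_1(n)^2-2W_2(n)^2-2W_3(n)^2-4W_4(n)^2$. The components $W_i(n)$ are elements of the \emph{second} Wiener chaos converging in distribution to Gaussians; they are emphatically not ``empirical averages of i.i.d.\ chi-squares converging in $L^2$ to a standard $X_i^2$'' --- an empirical average of chi-squares concentrates around its mean by the LLN and cannot converge in $L^2$ to a non-degenerate chi-square. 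What is actually required here is a \emph{quantitative multivariate CLT} for $W(n_j)$ towards the Gaussian vector $Z$ with covariance $\Gamma(\eta)$, obtained via Stein--Malliavin / Fourth Moment Theorem machinery for second-chaos vectors, and then a transfer of this bound through the (locally Lipschitz but unbounded) quadratic form. This is the key technical engine of the proof in \cite{PR:18}, and your writeup neither names it nor explains how to push a Wasserstein bound on $W(n_j)$ through the quadratic functional.

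Moreover, the exponent $1/2$ in $|\,|\widehat\mu_{n_j}(4)|-\eta\,|^{1/2}$ is attributed to the wrong step. If one couples $\Mc_{\eta_1}$ and $\Mc_{\eta_2}$ using the \emph{same} $X_1,X_2$ in \eqref{e:r}, one obtains $d_W(\Mc_{\eta_1},\Mc_{\eta_2})\ll |\eta_1-\eta_2|$, which is Lipschitz (power $1$), not $1/2$; your Step~3 as stated proves too much (a better rate than claimed), which is a signal that the $1/2$ must arise elsewhere. In fact the square root enters when comparing the covariance $\Gamma(\widehat\mu_{n_j}(4))$ of the prelimit vector $W(n_j)$ to $\Gamma(\eta)$: since $\Gamma(\eta)$ is singular (and its smallest nonzero eigenvalues $\frac{1\mp\eta}{8},\frac{1+\eta}{4}$ can vanish as $\eta\to\pm1$), coupling the corresponding Gaussian vectors forces a loss $\|\Gamma_1^{1/2}-\Gamma_2^{1/2}\|\asymp \|\Gamma_1-\Gamma_2\|^{1/2}$, and this is what produces the $|\cdot|^{1/2}$ in the final bound. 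Until the quantitative CLT for $W(n_j)$ is stated and proved at rate $\Nc_{n_j}^{-1/2}$ and the covariance-comparison step is handled at the Gaussian-coupling level, the proposal does not constitute a proof of \eqref{maineq}.
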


\subsubsection{Shrinking domains}

For $0<s<1/2$, we set
$$\Lc_{n;s} := \text{length}\left( T_{n}^{-1}\lbrace 0\rbrace \cap B(s) \right)$$ to be the nodal length of $T_{n}$ restricted to a radius-$s$ ball $B(s)$,
where by the stationarity of $T_{n}$ we may assume that $B(s)$ is centered. Kac-Rice formula (see e.g. \cite[Chapter 11]{AT:07}) immediately gives
\begin{equation*}
\E[\Lc_{n;s}] = \frac{1}{2\sqrt{2}} (\pi s^{2})\cdot \sqrt{E_{n}}.
\end{equation*}
One of the main results of \cite{BMW:20} is that the variance of the nodal length $\Lc_{n;s}$ of $T_{n}$ restricted to balls that are shrinking slightly above
the so-called Planck scale has a similar form \eqref{eq:var leading KKW}, and
in particular one has the precise identity \cite[(3.36)]{BMW:20}
$$
\Cov(\Lc_{n;s}, \Lc_n) = (\pi s^2) \cdot \Var(\Lc_n).
$$
In what follows we keep the notation of \cite{BMW:20} to avoid confusion.
\begin{theorem}[Theorem 1.1, \cite{BMW:20}]
\label{thm:toral univ gen}
For every $\eps>0$ there exists a density-$1$ sequence of numbers $$S'=S'(\eps)\subseteq S$$ so that the following hold.

\begin{enumerate}

\item Along $n\in S'$ we have $\Nc_{n}\rightarrow \infty$, and the set of accumulation
points of $\{\widehat{\mu}_{n}(4)\}_{n\in S'}$ contains the interval $[0,1]$.

\item For $n\in S'$, uniformly for all $s>n^{-1/2+\eps}$ we have
\begin{equation}
\label{eq:Var asympt gen}
\var( \Lc_{n;s}) = c_{n} \cdot (\pi s^{2})^{2}\cdot \frac{E_{n}}{\Nc_{n}^{2}}
\left(1+O_{\eps}\left(\frac{1}{\Nc_{n}^{1/2}} \right) \right),
\end{equation}
where $c_n$ is defined as in (\ref{eq:var leading KKW}),
and the constant involved in the `$O$'-notation depends on $\eps$ only.

\item
For random variables $X,Y$ we denote as usual their correlation $$\Corr(X,Y) := \frac{\Cov(X,Y)}{\sqrt{\var(X)}\cdot \sqrt{\var(Y)}}.$$
Then for every $\eps>0$ we have that
\begin{equation}
\label{eq:restr nod len full corr full}
\sup\limits_{s>n^{-1/2+\eps}}\left| \Corr(\Lc_{n;s},\Lc_{n})-1\right| \rightarrow 0,
\end{equation}
i.e. the nodal length $\Lc_{n;s}$ of $T_{n}$ restricted to a small ball is asymptotically fully correlated with the total nodal length
$\Lc_{n}$ of $T_{n}$, uniformly for all $s>n^{-1/2+\eps}$.
\end{enumerate}
\end{theorem}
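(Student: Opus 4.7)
The plan is to reduce the theorem to three essentially independent ingredients: a number-theoretic input on lattice points for part (1), a Wiener chaos expansion of $\Lc_{n;s}$ for parts (2) and (3), and precise cancellation at the second chaos level that makes the fourth chaos dominant. First I would invoke the equidistribution result of Erd\H{o}s--Hall that produces a density-one subsequence on which $\mathcal{N}_n\to\infty$, and refine it using constructions of Cilleruelo--Sartori type energy levels to insert, inside this density-one set, subsequences realizing every value of $|\widehat{\mu}_n(4)|$ in $[0,1]$ as an accumulation point. A diagonal/countability argument combined with the freedom in choosing the $\eps$-exceptional set then yields the $S'=S'(\eps)$ of part (1).

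For parts (2) and (3) the main tool is the $L^2(\P)$-decomposition
\begin{equation*}
\Lc_{n;s}-\E[\Lc_{n;s}] = \sum_{q\ge 1}\Lc_{n;s}[2q],
\end{equation*}
where $\Lc_{n;s}[2q]$ is the projection on the $2q$-th Wiener chaos (odd chaoses vanish by the symmetry $T_n\stackrel{d}{=}-T_n$). As in the global case one checks that the second chaos projection $\Lc_{n;s}[2]$ contributes at a lower order: the coefficient from the Hermite expansion of the norm of the gradient vanishes, leaving $\Lc_{n;s}[2]$ with variance of smaller magnitude than $E_n/\mathcal{N}_n^2$. The leading contribution therefore comes from $\Lc_{n;s}[4]$, whose kernel one writes explicitly as a $B(s)$-integral of Hermite polynomials applied to $T_n(x)$ and its rescaled partial derivatives. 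Expanding the exponentials $e_\lambda$ and computing second moments reduces the variance of $\Lc_{n;s}[4]$ to a sum over $4$-correlations
\begin{equation*}
\{(\lambda_1,\lambda_2,\lambda_3,\lambda_4)\in\Lambda_n^4:\lambda_1+\lambda_2+\lambda_3+\lambda_4=0\},
\end{equation*}
modulated by Fourier integrals of $\mathbf{1}_{B(s)}$ of the form $\widehat{\mathbf{1}_{B(s)}}(\lambda_1+\lambda_2)$. The diagonal correlations produce the target $(1+\widehat{\mu}_n(4)^2)/512$ factor multiplied by $(\pi s^2)^2$, while non-degenerate correlations, controlled by Bombieri--Bourgain-type estimates already used in \cite{KKW:13}, contribute the $O(\mathcal{N}_n^{-1/2})$ remainder; the uniformity $s>n^{-1/2+\eps}$ comes from the fact that for such $s$ the Fourier transform $\widehat{\mathbf{1}_{B(s)}}(\xi)$ behaves like $\pi s^2$ on lattice sums of size $\sqrt{n}$ up to a negligible oscillatory remainder, which I expect to be the main technical obstacle.

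Finally, for (3) I would compute $\Cov(\Lc_{n;s},\Lc_n)$ chaos by chaos using orthogonality of Wiener chaoses: the cross-covariance at the fourth chaos level factors into the same $4$-correlation sum as above, but with only one copy of $\mathbf{1}_{B(s)}$, producing the clean identity $\Cov(\Lc_{n;s}[4],\Lc_n[4])=(\pi s^2)\Var(\Lc_n[4])(1+O(\mathcal{N}_n^{-1/2}))$. Combined with the variance asymptotic from (2) and the control on the higher chaoses (which have variance of smaller order via hypercontractivity and the same non-degenerate correlation bounds), Cauchy--Schwarz gives $\Corr(\Lc_{n;s},\Lc_n)\to 1$ with uniformity in $s>n^{-1/2+\eps}$. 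The principal difficulty throughout is keeping the error terms uniform in $s$ down to Planck scale, which forces one to treat the oscillatory Fourier integrals $\widehat{\mathbf{1}_{B(s)}}$ on the energy-$n$ lattice carefully rather than to absorb them into crude $L^\infty$ bounds.
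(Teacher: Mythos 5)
This statement is Theorem \ref{thm:toral univ gen}, which the paper explicitly attributes to \cite{BMW:20} (Benatar--Marinucci--Wigman). The paper you are working from does \emph{not} prove this theorem; it states it as a background result and cites the reference. So there is no proof in the paper to compare your sketch against, and the expected answer here would simply have been to point to \cite{BMW:20}.

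That said, your sketch is broadly consistent in spirit with the strategy of \cite{BMW:20}: Wiener chaos decomposition of $\Lc_{n;s}$, identification of the fourth chaos as dominant, a reduction of the variance to lattice-point $4$-correlations weighted by $\widehat{\mathbf{1}_{B(s)}}$, Bombieri--Bourgain type control of the non-degenerate correlations, and chaos-by-chaos computation of $\Cov(\Lc_{n;s},\Lc_n)$. One point worth flagging: you assert that the coefficient from the Hermite expansion of $\|\nabla T_n\|$ ``vanishes,'' leaving $\Lc_{n;s}[2]$ of lower order. This is exactly true only for the \emph{global} nodal length, where Berry's cancellation yields $\Lc_n[2]\equiv 0$ (Proposition \ref{teoexp}). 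For the restricted length $\Lc_{n;s}$ the second chaos does not vanish identically; showing it is of lower order uniformly down to Planck scale is one of the genuinely delicate parts of \cite{BMW:20}, and your sketch underestimates that difficulty by borrowing the exact-cancellation mechanism from the global case. Likewise, the uniformity in $s>n^{-1/2+\eps}$ --- which you correctly identify as the main obstacle --- occupies most of the technical work in \cite{BMW:20} and is not a routine deduction from the stationary-phase behaviour of $\widehat{\mathbf{1}_{B(s)}}$ alone. Your sketch of part (1) via Erd\H{o}s--Hall plus Cilleruelo/Sartori-type thin sequences is also the right idea at a high level, but it still needs the diagonal construction to be carried out carefully so that the resulting $S'$ is genuinely density one.
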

Under the same assumptions as Theorem \ref{thm:toral univ gen}, in view of (\ref{eq:restr nod len full corr full}), $\Lc_{n;s}$ obeys the same limiting law (\ref{eq:Lctild->Meta}) as the total nodal length. 
The aim of this paper is to refine the result of Theorem \ref{thm:lim dist sep} and the consequences of Theorem \ref{thm:toral univ gen} by means of Large Deviation theory.

\subsection{Large Deviation Principles}\label{sub:MDPs}

Let $(\Omega, \mathcal F, \mathbb P)$ be a complete probability space,
and $\lbrace X_n\rbrace_{n\in \mathbb N}$ a sequence of random variables taking 
values in some topological space $\mathcal X$. For our purpose we can restrict ourselves to 
the case where $\mathcal X$ is a metric space; we denote by $d$ its metric 
and by $\mathcal B(\mathcal X)$ its Borel $\sigma$-field. 

\begin{definition}\label{defLDP}  We say that $\lbrace X_n\rbrace_{n\in \mathbb N}$ satisfies the Large Deviation Principle (LDP) with speed $0\le s_n\nearrow +\infty$ and (good) rate function $\mathcal I:\mathcal X\to [0,+\infty]$ if the level sets $\lbrace x : \mathcal I(x) \le \alpha \rbrace,\alpha\geq 0$ are compact and for all $B\in \mathcal B(\mathcal X)$ we have 
\begin{equation*}
- \inf_{x\in \mathring{B}} \mathcal I(x)\le \liminf_{n\sto +\infty} \frac{1}{s_n} \log \mathbb P(X_n\in B) \le \limsup_{n\sto +\infty} \frac{1}{s_n} \log \mathbb P(X_n\in B) \le - \inf_{x\in \bar{B}} \mathcal I(x),
\end{equation*}
where $\mathring B$ (resp. $\bar B$) denotes the interior (resp. the closure) of $B$. 
\end{definition}
\begin{remark}\label{rem:LDP-consequence}\rm
In this paper, and in several common cases in the literature, there
exists
$x_0$ such that $\mathcal I(x)=0$ if and only if $x=x_0$. As a consequence, if $X_n$ satisfies the LDP in Definition \ref{defLDP}, then it
converges to $x_0$ as $n\sto +\infty$, at least in probability. Roughly speaking, one can
also say that
for every Borel set $B$ such that $x_0\notin\bar{B}$, the quantity $\mathbb P(X_n\in B)$
decays as $e^{-s_n \mathcal I(B)}$, where $\mathcal I(B):=\inf\{\mathcal
I(x):x\in B\}>0$.
Thus, in some sense, the larger is the rate function
$\mathcal I$
locally around $x_0$, the faster is the convergence to $x_0$.
\end{remark}

Let us state the G\"artner-Ellis Theorem (see point (c) of \cite[Theorem 2.3.6]{DZ:98}) in $\Rd$; in particular we denote the inner product
in $\Rd$ by $\langle\cdot,\cdot\rangle$. This theorem will be used in Section \ref{step1} with $d=3$.

\begin{theorem}\label{GET}
Let $\lbrace X_n\rbrace_{n\in \mathbb N}$ be a family of $\mathbb{R}^d$-valued random variables. Assume that $0\le s_n\nearrow +\infty$ and, for all 
$\theta\in\mathbb{R}^d$, the limit
$$\psi(\theta):=\lim_{n\sto +\infty}\frac{1}{s_n}\log\mathbb{ E}[e^{s_n \langle\theta, X_n\rangle}]$$
exists as an extended real number. Further, assume that the origin belongs to the interior $\mathcal{D}(\psi):=\{\theta\in\mathbb{R}^d:
\psi(\theta)<+\infty\}$. Then, if the function $\psi$ is essentially smooth
%\footnote{The function $\psi$ is essentially smooth if it is 
%differentiable throughout the interior of $\mathcal{D}(\psi)$, and the norm of $\nabla\psi(\theta_n)$ tends to infinity whenever 
%$\lbrace \theta_n\rbrace_{n\in \mathbb N}$
%is a sequence in the interior of $\mathcal{D}(\psi)$ converging to a boundary point of that set; see e.g. \cite[Definition 2.3.5]{DZ:98}.} 
and lower 
semicontinuous, $\lbrace X_n\rbrace_{n\in \mathbb N}$ satisfies the LDP with speed $s_n$ and good rate function $\psi^*$ defined by
$\psi^*(x):=\sup_{\theta\in\R^d}\{\langle\theta, x\rangle-\psi(\theta)\}$ (that is the \emph{Legendre transform} of $\psi$).
\end{theorem}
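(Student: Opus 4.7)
The plan is to establish the matching upper and lower bounds of Definition \ref{defLDP} separately and then to verify compactness of the level sets of $\psi^*$, following the classical route (Gärtner 1977, Ellis 1984).

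\textbf{Upper bound.} The core ingredient is the Chernoff estimate: for any $\theta\in\R^d$ and any half-space $H=\{x:\langle\theta,x\rangle\ge a\}$, Markov's inequality gives
\begin{equation*}
\bP(X_n\in H)\le e^{-s_n a}\,\bE[e^{s_n\langle\theta,X_n\rangle}],
\end{equation*}
so $\limsup_{n} \frac{1}{s_n}\log\bP(X_n\in H)\le -(a-\psi(\theta))$. Optimising the choice of $(\theta,a)$, one obtains the half-space upper bound with $-\psi^*(x)$ whenever $x$ lies in $H^c$. The assumption $0\in\mathring{\mathcal D}(\psi)$ yields \emph{exponential tightness}: because $\psi$ is finite in a neighbourhood of the origin, Chernoff applied with small $\pm\theta$ on the coordinate axes gives $\bP(|X_n|>K)\le e^{-s_n c(K)}$ with $c(K)\to\infty$. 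Using exponential tightness, the half-space bound extends first to compact sets by a finite covering with complements of half-spaces, and then to arbitrary closed sets $F$, yielding $\limsup_{n}\frac{1}{s_n}\log\bP(X_n\in F)\le -\inf_{x\in F}\psi^*(x)$.

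\textbf{Lower bound.} This is the step where essential smoothness plays its key role. For an open set $G$ and a point $x\in G$ I want to show $\liminf_{n}\frac{1}{s_n}\log\bP(X_n\in G)\ge -\psi^*(x)$. If $x$ is an \emph{exposed point} of $\psi^*$ with exposing hyperplane $\theta_x\in\mathring{\mathcal D}(\psi)$, one introduces the exponentially tilted measures
\begin{equation*}
\frac{d\widetilde{\bP}_n}{d\bP}=\frac{e^{s_n\langle\theta_x,X_n\rangle}}{\bE[e^{s_n\langle\theta_x,X_n\rangle}]}.
\end{equation*}
Smoothness of $\psi$ at $\theta_x$ (via differentiation of $\frac{1}{s_n}\log\bE[\cdot]$ and convex-duality) implies that, under $\widetilde{\bP}_n$, the $X_n$ concentrate exponentially around $\nabla\psi(\theta_x)=x$. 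Writing $\bP(X_n\in B(x,\delta))=\bE_{\widetilde{\bP}_n}[e^{-s_n\langle\theta_x,X_n\rangle+s_n\psi(\theta_x)+o(s_n)}\mathbf 1_{B(x,\delta)}(X_n)]$ and sending $\delta\downarrow 0$ gives the desired lower bound at $x$. For a general $x\in G$ one exploits that, under essential smoothness, every point of $\mathcal D(\psi^*)$ can be approached by exposed points on which $\psi^*$ is continuous; this fact plus lower semicontinuity of $\psi^*$ delivers $\inf_{y\in G}\psi^*(y)$.

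\textbf{Compactness and conclusion.} Because $\psi$ is convex, lower semicontinuous and finite on a neighbourhood of $0$, its Legendre transform $\psi^*$ is a convex, lower semicontinuous function with $\psi^*(x)\to+\infty$ as $|x|\to\infty$ at rate $\sup_{|\theta|\le r}\{r|x|-\psi(\theta)\}$. Hence the level sets $\{\psi^*\le\alpha\}$ are closed and bounded, so compact, and $\psi^*$ is a good rate function. Combining with the bounds above yields the LDP at speed $s_n$.

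The main obstacle is the lower bound: one must carefully handle points of $\mathcal D(\psi^*)$ that are not exposed (typically boundary points where $\nabla\psi$ blows up). Essential smoothness is exactly the hypothesis designed to guarantee that such bad points are negligible for the variational infimum, so the tilted-measure argument applied at nearby exposed points is enough.
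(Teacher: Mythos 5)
The paper does not prove this theorem: it is stated as a known result and cited directly to Dembo--Zeitouni, Theorem~2.3.6(c) in \cite{DZ:98}, so there is no in-paper proof to compare your argument against. Your sketch is a faithful outline of the classical Dembo--Zeitouni proof: Chernoff/half-space bound plus exponential tightness (from $0\in\mathring{\mathcal D}(\psi)$) for the upper bound on closed sets; exponential change of measure at exposed points plus the Rockafellar-type approximation argument enabled by essential smoothness for the lower bound on open sets; and convexity, lower semicontinuity, and superlinear growth of $\psi^*$ for goodness of the rate function. This is the right decomposition and the hypotheses are invoked in the right places.

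Two spots are stated a bit loosely and would need care in a fully detailed write-up. First, in the upper bound your phrase ``one obtains the half-space upper bound with $-\psi^*(x)$ whenever $x$ lies in $H^c$'' conflates the half-space estimate with the pointwise bound; the actual argument for compact $K$ covers $K$ by finitely many small balls and, for each centre $x$, picks $\theta$ with $\langle\theta,x\rangle-\psi(\theta)$ close to $\min(\psi^*(x)-\varepsilon,1/\varepsilon)$, then uses Chernoff on the resulting half-space containing the ball. Second, in the lower bound, the statement that tilted measures ``concentrate exponentially around $\nabla\psi(\theta_x)=x$'' should be justified: the standard route is to observe that the tilted log-moment generating function $\widetilde\psi(\lambda)=\psi(\theta_x+\lambda)-\psi(\theta_x)$ has $\nabla\widetilde\psi(0)=x$ and is finite near $0$, so the already-proved upper bound (applied to the tilted family) gives concentration. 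Neither of these affects the correctness of the overall plan; your proposal follows the standard argument the cited textbook gives.
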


Let us state the so-called \emph{contraction principle} \cite[Theorem 4.2.1]{DZ:98}.
\begin{theorem}\label{contractionprinciple}
Let $(\mathcal Y, \mathcal B(\mathcal Y))$ be a metric space with its Borel $\sigma$-field, and $f:\mathcal X \to \mathcal Y$ a continuous function. If $\lbrace X_n\rbrace_{n\in \mathbb N}$ satisfies a LDP with (good) rate function $\mathcal I$, then $\lbrace Y_n:= f(X_n)\rbrace_{n\in \mathbb N}$ satisfies a LDP with (good) rate function $\mathcal I_f:\mathcal Y\to [0,+\infty]$ defined as 
\begin{equation*}
\mathcal I_f(y) := \inf_{x\in f^{-1}(y)} \mathcal I(x).
\end{equation*}
\end{theorem}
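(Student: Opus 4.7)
The plan is to transport the LDP for $\lbrace X_n\rbrace_n$ through the continuous map $f$ by reducing statements about $Y_n=f(X_n)$ in $\mathcal Y$ to statements about $X_n$ in preimages $f^{-1}(\cdot)\subseteq\mathcal X$, exploiting the fact that continuity of $f$ sends open (resp.\ closed) subsets of $\mathcal Y$ to open (resp.\ closed) subsets of $\mathcal X$ under $f^{-1}$.

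First I would check that $\mathcal I_f$ is a genuine good rate function. Using the convention $\inf\emptyset=+\infty$ one has $\mathcal I_f:\mathcal Y\to[0,+\infty]$. For $\alpha\ge 0$ set
\begin{equation*}
K_\alpha:=\{x\in\mathcal X:\mathcal I(x)\le\alpha\},\qquad \Psi_\alpha:=\{y\in\mathcal Y:\mathcal I_f(y)\le\alpha\}.
\end{equation*}
The goal is to prove $\Psi_\alpha=f(K_\alpha)$. The inclusion $f(K_\alpha)\subseteq\Psi_\alpha$ is immediate from the definition of $\mathcal I_f$. For the converse, suppose $\mathcal I_f(y)\le\alpha$ and pick $x_k\in f^{-1}(y)$ with $\mathcal I(x_k)\to\mathcal I_f(y)$; eventually $x_k\in K_{\alpha+1}$, which is compact, so along a subsequence $x_k\to x^\star$. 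Continuity of $f$ forces $x^\star\in f^{-1}(y)$, and the lower semicontinuity of $\mathcal I$ (automatic from the good rate function hypothesis) gives $\mathcal I(x^\star)\le\liminf_k\mathcal I(x_k)=\mathcal I_f(y)\le\alpha$, whence $y\in f(K_\alpha)$. Thus $\Psi_\alpha=f(K_\alpha)$ is the continuous image of a compact set and is therefore compact.

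Next I would derive the LDP bounds for $\lbrace Y_n\rbrace_n$. For any Borel set $B\subseteq\mathcal Y$ one has $\mathbb P(Y_n\in B)=\mathbb P(X_n\in f^{-1}(B))$, and continuity of $f$ gives the inclusions
\begin{equation*}
f^{-1}(\mathring B)\subseteq (f^{-1}(B))^\circ\qquad\text{and}\qquad\overline{f^{-1}(B)}\subseteq f^{-1}(\overline B).
\end{equation*}
Combining these with the LDP for $\lbrace X_n\rbrace_n$ applied to $f^{-1}(B)$, and with the elementary double-infimum identity
\begin{equation*}
\inf_{x\in f^{-1}(A)}\mathcal I(x)=\inf_{y\in A}\mathcal I_f(y),\qquad A\subseteq\mathcal Y,
\end{equation*}
which is immediate from the definition of $\mathcal I_f$, yields
\begin{equation*}
-\inf_{y\in\mathring B}\mathcal I_f(y)\le\liminf_{n\to\infty}\frac{1}{s_n}\log\mathbb P(Y_n\in B)\le\limsup_{n\to\infty}\frac{1}{s_n}\log\mathbb P(Y_n\in B)\le-\inf_{y\in\overline B}\mathcal I_f(y),
\end{equation*}
which is precisely the LDP for $\lbrace Y_n\rbrace_n$ with rate function $\mathcal I_f$.

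The main obstacle is the attainment step used to identify $\Psi_\alpha$ with $f(K_\alpha)$: it hinges on the compactness of the sublevel sets of $\mathcal I$ together with its lower semicontinuity, and without this representation the compactness of the level sets of $\mathcal I_f$ is not transparent. Once this representation is in place, the open/closed transport of sets under $f^{-1}$ and the double-infimum identity reduce the LDP bounds themselves to a purely formal matter.
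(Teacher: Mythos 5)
The paper does not prove this result: it states it as a citation to \cite[Theorem 4.2.1]{DZ:98}, so there is no in-paper proof to compare against. Your argument is correct and reproduces the standard textbook proof of the contraction principle. The identification $\Psi_\alpha = f(K_\alpha)$ via a minimizing sequence in the compact sublevel set $K_{\alpha+1}$, together with lower semicontinuity of $\mathcal I$ (which indeed follows from compactness of its sublevel sets), correctly establishes that $\mathcal I_f$ is a good rate function; the topological inclusions $f^{-1}(\mathring B)\subseteq(f^{-1}(B))^\circ$ and $\overline{f^{-1}(B)}\subseteq f^{-1}(\overline B)$, combined with the double-infimum identity $\inf_{f^{-1}(A)}\mathcal I=\inf_A\mathcal I_f$, then transfer the LDP bounds cleanly from $X_n$ to $Y_n=f(X_n)$. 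No gaps.
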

Theorem \ref{contractionprinciple} ensures that the LDP is preserved under continuous transformations. In order to extend the contraction principle beyond the continuous case, it is beneficial to recall the notion of exponential equivalence \cite[Definition 4.2.10]{DZ:98}.
\begin{definition}\label{expdef} Let $\lbrace X_n\rbrace_{n\in \mathbb N}$ and $\lbrace Y_n\rbrace_{n\in \mathbb N}$ be two sequences of random variables taking values in $\mathcal X$. We say that they are exponentially equivalent at speed $0\le s_n\nearrow +\infty$ if, for every $\delta>0$, the set $\Gamma_\delta:=\lbrace d(X_n, Y_n) > \delta\rbrace\subseteq \Omega$ is measurable and 
\begin{equation}\label{exp_equiv}
\limsup_{n\sto +\infty} \frac{1}{s_n} \log \mathbb P (\Gamma_\delta ) = -\infty.
\end{equation} 
\end{definition}
The following theorem states that, from the point of view of Large Deviations, sequences of random variables that are exponentially equivalent are identical, see also \cite[Theorem 4.2.13]{DZ:98}. 
\begin{theorem}\label{thDZ}
Assume that $\lbrace X_n\rbrace_{n\in \mathbb N}$ satisfies the LDP with speed $s_n$ and good rate function $\mathcal I$.
Then, if $\lbrace X_n\rbrace_{n\in \mathbb N}$ and $\lbrace Y_n\rbrace_{n\in \mathbb N}$ are exponentially equivalent at
speed $s_n$, the same LDP holds for $\lbrace Y_n\rbrace_{n\in \mathbb N}$.
\end{theorem}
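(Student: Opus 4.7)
The plan is to establish the LDP for $\{Y_n\}_{n\in\mathbb{N}}$ with speed $s_n$ and rate $\mathcal{I}$ in its equivalent two-sided form: the upper bound $\limsup_n s_n^{-1}\log\mathbb{P}(Y_n\in F) \le -\inf_F \mathcal{I}$ for every closed $F\subseteq\mathcal{X}$ and the lower bound $\liminf_n s_n^{-1}\log\mathbb{P}(Y_n\in G) \ge -\inf_G \mathcal{I}$ for every open $G\subseteq\mathcal{X}$. The bridge between $X_n$ and $Y_n$ is, in both cases, a set-theoretic inclusion involving the exponentially negligible event $\Gamma_\delta=\{d(X_n,Y_n)>\delta\}$, combined with the elementary bound $\log(a+b)\le \log 2+\max(\log a,\log b)$.

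For the upper bound, I fix a closed $F$ and $\delta>0$, and let $F^\delta=\{x\in\mathcal{X}:d(x,F)\le\delta\}$ be the closed $\delta$-neighbourhood of $F$ (closed by continuity of $d(\cdot,F)$). Since $Y_n\in F$ together with $d(X_n,Y_n)\le\delta$ forces $X_n\in F^\delta$, one has
$$\{Y_n\in F\}\subseteq\{X_n\in F^\delta\}\cup\Gamma_\delta.$$
Applying the union bound together with the $\log$-inequality above, the LDP upper bound for $\{X_n\}$ on the closed set $F^\delta$, and $\limsup_n s_n^{-1}\log\mathbb{P}(\Gamma_\delta)=-\infty$ coming from exponential equivalence, I obtain $\limsup_n s_n^{-1}\log\mathbb{P}(Y_n\in F)\le -\inf_{F^\delta}\mathcal{I}$ for each $\delta>0$. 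Letting $\delta\to 0$ closes the upper bound.

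For the lower bound, I fix an open $G$ and a point $x\in G$ with $\mathcal{I}(x)<+\infty$ (the case $\inf_G\mathcal{I}=+\infty$ being trivial) and choose $\delta>0$ so small that the open ball $B(x,\delta)$ lies in $G$. The reverse inclusion
$$\{X_n\in B(x,\delta/2)\}\setminus\Gamma_{\delta/2}\subseteq\{Y_n\in G\}$$
gives $\mathbb{P}(Y_n\in G)\ge\mathbb{P}(X_n\in B(x,\delta/2))-\mathbb{P}(\Gamma_{\delta/2})$. By the LDP lower bound for $\{X_n\}$ the first term is eventually at least $e^{-s_n(\mathcal{I}(x)+\eps)}$ for any $\eps>0$, while by exponential equivalence the subtracted term is smaller than $e^{-Ls_n}$ for arbitrarily large $L$; picking $L>\mathcal{I}(x)+\eps$ makes the subtracted term negligible compared to the main term, and I conclude $\liminf_n s_n^{-1}\log\mathbb{P}(Y_n\in G)\ge-\mathcal{I}(x)$. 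Taking the infimum over $x\in G$ completes the lower bound.

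The main technical obstacle is the passage $\delta\to 0$ in the upper bound, where one needs $\inf_{x\in F^\delta}\mathcal{I}(x)\to\inf_{x\in F}\mathcal{I}(x)$. This is precisely the step where the goodness of $\mathcal{I}$, already built into Definition \ref{defLDP}, is essential: I pick near-minimizers $x_{\delta_k}\in F^{\delta_k}$ with $\delta_k\to 0$ and $\mathcal{I}(x_{\delta_k})\to\alpha:=\lim_{\delta\to 0}\inf_{F^\delta}\mathcal{I}$; compactness of the sublevel sets of $\mathcal{I}$ confines $\{x_{\delta_k}\}$ to a common compact set, a subsequence converges to some $x^*\in F$ (using that $F$ is closed and $d(x_{\delta_k},F)\le\delta_k\to 0$), and lower semicontinuity yields $\mathcal{I}(x^*)\le\alpha$. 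Combined with the trivial bound $\inf_{F^\delta}\mathcal{I}\le\inf_F\mathcal{I}$ from $F\subseteq F^\delta$, this yields the required convergence and finishes the proof.
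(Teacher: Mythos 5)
The paper does not prove Theorem \ref{thDZ} itself; it simply cites \cite[Theorem 4.2.13]{DZ:98}. Your argument is correct and is precisely the standard proof of that result: the upper bound via the inclusion $\{Y_n\in F\}\subseteq\{X_n\in F^\delta\}\cup\Gamma_\delta$ plus the goodness/lower-semicontinuity argument for $\inf_{F^\delta}\mathcal I\downarrow\inf_F\mathcal I$, and the lower bound via the reverse inclusion into a small ball inside $G$ — so you have in effect reproduced the proof from the cited reference.
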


A Moderate Deviation Principle (MDP) is a class of LDPs that allows to
\emph{fill the
gap} between the following asymptotic regimes:
\begin{itemize}
\item a convergence to a constant, which is governed by a suitable LDP;
\item a weak convergence to a centered Normal distribution.
\end{itemize}
Some examples of classes of LDPs of this kind in which the weak convergence is towards a non-Gaussian law can be found e.g. in \cite{GM:21} (see also the references cited therein). It is worth stressing that actually this kind of phenomena are quite natural for random sums (in particular with some suitable compound Geometric distributions).

%In the recent literature it is also possible to find examples of
%\emph{non-central}
%MDPs, i.e. cases where a convergence in law towards a non-Gaussian law holds;
%see e.g. \cite{GM:21} and the references cited therein.

Typically a MDP is class of LDPs concerning families of random variables
which depend on
the choice of certain scalings satisfying some suitable conditions,
and these LDPs
are governed by the same rate function. As a prototype example we can
consider the
empirical means of i.i.d. random variables, see e.g. \cite[Theorem
3.7.1]{DZ:98}, where
the asymptotic regimes cited above concern the Law of Large Numbers
(LLN) and the Central
Limit Theorem (CLT); in particular a LDP related to the LLN is provided
by the well-known
Cram\'er Theorem \cite[Theorem 2.2.3]{DZ:98}.

Actually we cannot rigorously say that the MDP in this paper, see Theorems \ref{MR} and \ref{MR-shrinking}, (as well as the MDP in
\cite{MRT:21}) allows
to fill the gap between two asymptotic regimes as above because
for the moment we do not have
a LDP for the convergence to a constant for our statistics of the field. In the case of the nodal length, as anticipated in the Introduction, from \eqref{mediaL} and \eqref{eq:var leading KKW} we know that for every $\epsilon >0$ 
$$
\mathbb P \left ( \left |\frac{\mathcal L_n}{\sqrt{E_n}}- \frac{1}{2\sqrt 2}\right | > \epsilon \right ) =O_\epsilon \left (  \frac{1}{\mathcal N_n^2}\right ),\qquad \textnormal{as }\mathcal N_n\to +\infty$$
(where the constant involved in the $O$-notation depends on $\epsilon$), 
hence it is natural to explore exponential concentration for $\mathcal L_n/\sqrt{E_n}$ around its mean. 
Nevertheless we believe that the investigation of a LDP for the sequence of r.v.'s $\lbrace \mathcal L_n/\sqrt{E_n}\rbrace_n$ deserves a separate study, eventually together with a LDP for the nodal length of random spherical harmonics. However,
if it is possible to fill that gap, we would have a \emph{non-central}
MDP where the weak
convergence is towards the distribution of the random variable
$\mathcal{M}_\eta$ defined
in \eqref{e:r}. Since the limiting distribution of the nodal length is not universal, in particular it depends on the subsequence of energy levels, in accordance with the non-universality of our MDP stated above we say that our MDP is also \emph{non-universal}.
 
\section{Main Results}\label{sec-MR}

In this section we finally state our main results, which are non-universal non-central MDPs (see Section \ref{sub:MDPs}) refining Theorem \ref{thm:lim dist sep} and the consequences of Theorem \ref{thm:toral univ gen}, specifically a class of LDPs for the nodal length of Arithmetic Random Waves for certain scalings $\{\alpha_{n_j}\}_j$ ($\{n_j\}_j \subset S$) that grow to infinity slowly as $j\sto \infty$ (see condition \eqref{cond:scaling}), with speed $\alpha_{n_j}$, and  an $\eta$-dependent rate function $I_\eta$ (see (\ref{eta-thm})). Moreover, for $\alpha_{n_j}\equiv1$ (note that in this case the condition \eqref{cond:scaling} fails) we have the convergence in distribution to the random variable $\mathcal M_\eta$  defined in \eqref{e:r}, that is, we retrieve Theorems \ref{thm:lim dist sep} and the consequences of Theorem \ref{thm:toral univ gen}.

\begin{theorem}\label{MR}
Let $\{n_j\}_j\subseteq S$ be such that $\mathcal N_{n_j}\sto+\infty$ and $\widehat{\mu}_{n_j}(4)\sto\eta\in[-1,1]$ as $j\sto+\infty$, and $\{\alpha_{n_j}\}_j$ be any sequence of positive numbers such that, as $j\sto+\infty$,
\begin{equation}\label{cond:scaling}
\alpha_{n_j} \to +\infty \qquad \text{and} \qquad \alpha_{n_j}/\log \mathcal N_{n_j} \to 0\,.
\end{equation}
Then the sequence of random variables 
$
\{\widetilde\Lc_{n_j}/\alpha_{n_j}\}_j
$
satisfies a Moderate Deviation Principle (MDP) with speed $\alpha_{n_j}$ and rate function 
\begin{equation}\label{rate-function}
I_\eta(y)=\begin{cases}
\ds -y\frac{\sqrt{1 + \eta^2}}{1+|\eta|}&\text{ if } \,\, y\le 0 \\[12pt]
\ds  +\infty &\text{ if }\,\, y>0
\end{cases}\,.
\end{equation}
\end{theorem}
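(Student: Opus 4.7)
The strategy parallels the approach of \cite{MRT:21} for the sphere: one isolates the dominant Wiener-chaos component of $\widetilde{\mathcal{L}}_{n_j}$, proves an LDP at speed $\alpha_{n_j}$ for a finite-dimensional auxiliary vector by the G\"artner--Ellis theorem (Theorem~\ref{GET}), transfers it to the dominant component by the contraction principle (Theorem~\ref{contractionprinciple}), and closes by exponential equivalence (Theorem~\ref{thDZ}). The starting point is the chaos decomposition of \cite{MPRW:16}, namely $\widetilde{\mathcal{L}}_{n_j} = D_{n_j} + R_{n_j}$, where $D_{n_j}$ is the (normalised) fourth-chaos projection, the second chaos vanishes identically, and $\|R_{n_j}\|_{L^2}^2 = O(\mathcal{N}_{n_j}^{-1/2})$. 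Modulo an $o_{L^2}(1)$ correction, $D_{n_j}$ can be rewritten, as in \cite{MPRW:16}, in the diagonalised form
$$
D_{n_j} \;=\; \frac{1}{2\sqrt{1+\widehat{\mu}_{n_j}(4)^2}}\Big(2 - (1+\widehat{\mu}_{n_j}(4))\,W_{n_j}^{(1)} - (1-\widehat{\mu}_{n_j}(4))\,W_{n_j}^{(2)}\Big),
$$
where $(W_{n_j}^{(1)},W_{n_j}^{(2)})$ are two natural quadratic functionals in the Gaussian coefficients $\lbrace a_\lambda\rbrace_{\lambda\in\Lambda_{n_j}^+}$ whose joint law converges to that of $(X_1^2,X_2^2)$ with $X_1,X_2$ i.i.d.\ standard Gaussians.

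For the G\"artner--Ellis step (Theorem~\ref{GET}, applied in $\mathbb{R}^3$ to an auxiliary vector $\mathbf{W}_{n_j}$ whose components are $W_{n_j}^{(1)}, W_{n_j}^{(2)}$ together with a third quadratic functional needed to close the chaos expansion) I would compute, for each $\theta\in\mathbb{R}^3$, the limit $\psi(\theta) := \lim_j (1/\alpha_{n_j})\log \mathbb{E}[e^{\langle\theta,\mathbf{W}_{n_j}\rangle}]$. Because the components of $\mathbf{W}_{n_j}$ are quadratic in independent Gaussians with converging covariance structure, the moment generating functions stay bounded on a fixed open neighbourhood $\mathcal{D}$ of the origin (controlled by the usual chi-squared singularity $1-2\theta_i>0$) and diverge outside; dividing by $\alpha_{n_j}\to\infty$ yields $\psi\equiv 0$ on $\mathcal{D}$ and $\psi\equiv +\infty$ outside. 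This $\psi$ is essentially smooth and lower semicontinuous and the origin lies in the interior of its finiteness domain, so Theorem~\ref{GET} provides an LDP for $\mathbf{W}_{n_j}/\alpha_{n_j}$ at speed $\alpha_{n_j}$ with rate $\psi^{*}$, which is concentrated on the nonnegative orthant (a consequence of the $\chi^2$ structure) and linear in the active coordinates.

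Composing with the continuous limiting map $f(v) = -\big((1+\eta)v_1+(1-\eta)v_2\big)/\big(2\sqrt{1+\eta^2}\big)$, and absorbing both the vanishing deterministic shift $1/(\sqrt{1+\eta^2}\,\alpha_{n_j})$ and the parameter convergence $\widehat{\mu}_{n_j}(4)\to\eta$ by an easy preliminary exponential-equivalence argument, the contraction principle (Theorem~\ref{contractionprinciple}) yields an LDP for $D_{n_j}/\alpha_{n_j}$ with rate $I_\eta(y) = \inf\lbrace \psi^{*}(v) : f(v)=y\rbrace$. Direct minimisation finishes this step: the nonnegativity constraint forces $f(v)\le 0$, so $I_\eta(y) = +\infty$ for $y>0$; for $y<0$, minimising $(v_1+v_2)/2$ under $(1+\eta)v_1+(1-\eta)v_2 = -2y\sqrt{1+\eta^2}$ with $v_1,v_2\ge 0$ is a linear program attained at the coordinate with the larger coefficient $1+|\eta|$, giving precisely $I_\eta(y) = -y\sqrt{1+\eta^2}/(1+|\eta|)$ as in \eqref{rate-function}.

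The remaining and most delicate step is to upgrade this LDP from $D_{n_j}/\alpha_{n_j}$ to $\widetilde{\mathcal{L}}_{n_j}/\alpha_{n_j}$ via exponential equivalence (Theorem~\ref{thDZ}), i.e.\ to show
$$
(1/\alpha_{n_j})\log \mathbb{P}\big(|R_{n_j}|>\delta\,\alpha_{n_j}\big) \longrightarrow -\infty\qquad\text{for every }\delta>0.
$$
The main tool is the hypercontractivity inequality on Wiener chaoses: if $F_q$ belongs to the $q$-th chaos, then $\mathbb{P}(|F_q|>t)\le \exp(-c_q(t/\|F_q\|_{L^2})^{2/q})$. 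Combined with chaos-by-chaos variance estimates giving $\|R_{n_j}\|_{L^2} = O(\mathcal{N}_{n_j}^{-1/4})$, this yields, at each chaos level, a bound of the form $\exp(-c(\delta\,\alpha_{n_j}\,\mathcal{N}_{n_j}^{1/4})^{2/q})$; after dividing by $\alpha_{n_j}$, the hypothesis $\alpha_{n_j} = o(\log\mathcal{N}_{n_j})$ forces the exponent to diverge to $-\infty$ uniformly in $q$, despite the shrinking factor $2/q$. Controlling the tail of the chaos expansion uniformly so that these bounds combine into a single exponentially small probability is the principal technical hurdle, and it is exactly here that the logarithmic scaling condition on $\alpha_{n_j}$ is essential.
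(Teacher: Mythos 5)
Your overall architecture (Gärtner--Ellis for an auxiliary vector, contraction, exponential equivalence) matches the paper's, but the Gärtner--Ellis step is applied at the wrong chaos level and the computation there does not work as written.

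You take $\mathbf W_{n_j}$ to have components $W^{(1)}_{n_j},W^{(2)}_{n_j}$ converging to $X_1^2,X_2^2$, and you claim their moment generating functions remain bounded on a fixed neighbourhood of the origin with the ``usual chi-squared singularity at $1-2\theta_i$''. This fails. These objects are \emph{squares} of second-chaos variables, hence fourth chaos (quartic, not quadratic, in the underlying Gaussians), and more concretely they have the form $\big(\sum_{\lambda\in\Lambda_n^+}c_\lambda(|a_\lambda|^2-1)\big)^2$ with $|a_\lambda|^2\sim\mathrm{Exp}(1)$ and $\max_\lambda|c_\lambda|\asymp\Nc_n^{-1/2}$. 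A square of a variable with exponential tails has moment generating function equal to $+\infty$ for \emph{every} $\theta>0$, for every finite $n$. Consequently your $\psi(\theta)=\lim_j\alpha_{n_j}^{-1}\log\E[e^{\langle\theta,\mathbf W_{n_j}\rangle}]$ is $+\infty$ on any $\theta$ with a positive coordinate, the origin is on the boundary of $\mathcal D(\psi)$ rather than in its interior, and $\psi$ (being constant equal to $0$ on its effective domain) is not steep, so Theorem~\ref{GET} cannot be invoked; its Legendre transform would also be $0$ on the positive orthant rather than the linear function $\sum_i x_i/2$ you use in the subsequent linear program. The correct fix, which is what the paper does, is to go down one chaos level: apply Gärtner--Ellis to the normalised \emph{second}-chaos vector $S_n/\sqrt{\alpha_n\Nc_n/2}$ (note the $\sqrt{\alpha_n}$ built into the moderate scaling of the argument, not merely a $1/\alpha_n$ in front of the log), where $S_n=\sum_{\lambda\in\Lambda_n^+}(|a_\lambda|^2-1)(\lambda_1^2/n,\lambda_2^2/n,\lambda_1\lambda_2/n)^\top$. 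There the log-MGF limit is the nondegenerate quadratic $\psi(\theta)=\tfrac12\langle\theta,\Sigma\theta\rangle$, everything is smooth, and the quartic structure is absorbed instead into the continuous map $f(x_1,x_2,x_3)=-(x_1-x_2)^2-4x_3^2$ used in the contraction principle; the rate you then minimise is a positive-definite quadratic on a quadric constraint, solved by Lagrange multipliers (with a separate treatment of the degenerate cases $\eta=\pm1$ via the pseudoinverse of $\Sigma$).

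On the exponential equivalence step: hypercontractivity on Wiener chaos is unnecessary, and you correctly sense that making it uniform over chaos orders is problematic (for large $q$ the bound $\exp(-c(\delta\alpha_n\Nc_n^{1/4})^{2/q})/e^{\alpha_n}$ does not diverge to $-\infty$). The paper bypasses this entirely: from the $L^2$ bound $\E[|\widetilde\Lc_n-\widetilde\Lc_n[4]|^2]=O(\Nc_n^{-1/2})$ (Peccati--Rossi, via Bombieri--Bourgain), a plain Markov inequality gives $\P(\alpha_n^{-1}|\widetilde\Lc_n-\widetilde\Lc_n[4]|>\delta)\ll\alpha_n^{-1}\Nc_n^{-1/4}$, and then $\alpha_n^{-1}\log\P(\cdots)\to-\infty$ exactly because $\alpha_n=o(\log\Nc_n)$; the logarithmic speed condition is \emph{designed} to make this elementary estimate suffice, and the extra term $R(n)$ in the fourth chaos is handled by the same kind of Chebyshev bound. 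You should also be careful that the object exponentially equivalent to $\widetilde\Lc_n$ is not the whole normalised fourth chaos $\widetilde\Lc_n[4]$ but the functional $\Mc_n$ obtained after removing the $R(n)$ term, and that removal itself requires an exponential-equivalence argument.
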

\begin{theorem}\label{MR-shrinking}
Let $\{n_j\}_j\subseteq S$ and ${n_j}^{-\frac12+\eps}<s_{n_j}<1/2$ be such that $s_{n_j}\sto0$. If $\{\alpha_{n_j}, \,j\in\N\}$ is any sequence of positive numbers such that, as $j\sto+\infty$,
$$
\alpha_{n_j} \to +\infty \qquad \text{and} \qquad \alpha_{n_j}/\log \mathcal N_{n_j} \to 0\,,
$$ 
then the sequence of random variables 
$
\{\widetilde\Lc_{n_j,{s_{n_j}}}/\alpha_{n_j}\}_j$
satisfies a Moderate Deviation Principle (MDP) with speed $\alpha_{n_j}$ and the same rate function as in \eqref{rate-function}.
\end{theorem}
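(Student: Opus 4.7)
The plan is to deduce Theorem \ref{MR-shrinking} from Theorem \ref{MR} by producing an exponential equivalence, at speed $\alpha_{n_j}$, between $\widetilde\Lc_{n_j,s_{n_j}}/\alpha_{n_j}$ and $\widetilde\Lc_{n_j}/\alpha_{n_j}$, and then applying Theorem \ref{thDZ}. The ingredient that makes this possible is the asymptotic full correlation of the local and global nodal lengths in Theorem \ref{thm:toral univ gen}, together with hypercontractivity on Wiener chaoses.

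First, I would write the Wiener chaos decomposition $\Lc_{n_j,s_{n_j}}-\E[\Lc_{n_j,s_{n_j}}]=\sum_{q\ge 1}[\Lc_{n_j,s_{n_j}}]_{2q}$, and the analogous one for $\Lc_{n_j}$. Under the uniform assumption $s_{n_j}>n_j^{-1/2+\eps}$, the results of \cite{BMW:20} yield that the variance is carried by the 4th chaos, with the remainder of $L^2$-order $o(\sqrt{\Var(\Lc_{n_j,s_{n_j}})})$ at a polynomial rate in $\mathcal N_{n_j}$, and the same holds for the total nodal length by \cite{MPRW:16}. Applying the standard hypercontractive tail bound $\mathbb P(|F|>t)\le \exp(-c_q(t/\|F\|_2)^{2/q})$ on each fixed $q$-th chaos, and using the scaling condition $\alpha_{n_j}=o(\log\mathcal N_{n_j})$, one shows that the contribution of all chaoses $q\ne 4$ to $\widetilde\Lc_{n_j,s_{n_j}}/\alpha_{n_j}$ is exponentially negligible at speed $\alpha_{n_j}$, and similarly for $\widetilde\Lc_{n_j}/\alpha_{n_j}$. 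This reduces the problem to comparing the fourth chaoses.

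Second, I would exploit the explicit identity of \cite{BMW:20} giving $\operatorname{Cov}([\Lc_{n_j,s_{n_j}}]_4,[\Lc_{n_j}]_4)=(\pi s_{n_j}^2)\Var([\Lc_{n_j}]_4)$ together with $\Var([\Lc_{n_j,s_{n_j}}]_4)\sim(\pi s_{n_j}^2)^2\Var([\Lc_{n_j}]_4)$, which means that the orthogonal remainder
\[
R_{n_j}:=[\Lc_{n_j,s_{n_j}}]_4-(\pi s_{n_j}^2)[\Lc_{n_j}]_4
\]
lies in the 4th chaos and has $L^2$-norm of strictly smaller order than $\sqrt{\Var(\Lc_{n_j,s_{n_j}})}$, again at a polynomial rate in $\mathcal N_{n_j}$. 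Invoking the 4th-chaos hypercontractive bound to $R_{n_j}/(\alpha_{n_j}\sqrt{\Var(\Lc_{n_j,s_{n_j}})})$, and once more using $\alpha_{n_j}=o(\log\mathcal N_{n_j})$, one obtains that the standardized difference between local and global nodal lengths is exponentially negligible at speed $\alpha_{n_j}$. Combined with the first step, this gives the desired exponential equivalence of $\widetilde\Lc_{n_j,s_{n_j}}/\alpha_{n_j}$ and $\widetilde\Lc_{n_j}/\alpha_{n_j}$, and Theorem \ref{thDZ} together with Theorem \ref{MR} closes the argument, the rate function being unchanged because the MDP is inherited from the global case.

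The main technical obstacle is quantifying, in the second step, the exact polynomial rate at which $\|R_{n_j}\|_2/\sqrt{\Var(\Lc_{n_j,s_{n_j}})}\to 0$, since it must be paired with the $2/4=1/2$ exponent coming from 4th-chaos hypercontractivity in order to beat $\alpha_{n_j}$. This is exactly where the moderate-deviation speed condition $\alpha_{n_j}=o(\log\mathcal N_{n_j})$ enters crucially: a polynomial smallness of $\|R_{n_j}\|_2$ translates, after hypercontractivity, into a bound like $\exp(-c\,\alpha_{n_j}^{1/2}\mathcal N_{n_j}^{\beta/2})$ for some $\beta>0$, whose logarithm divided by $\alpha_{n_j}$ tends to $-\infty$ precisely under the stated scaling. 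A genuine large deviation principle (where $\alpha_{n_j}$ is allowed to be of order $\sqrt{\Var(\Lc_{n_j})}$ itself) would not be reachable by this route and would require an essentially different strategy.
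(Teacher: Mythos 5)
Your overall strategy is the same as the paper's — show that $\widetilde\Lc_{n_j,s_{n_j}}/\alpha_{n_j}$ and $\widetilde\Lc_{n_j}/\alpha_{n_j}$ are exponentially equivalent at speed $\alpha_{n_j}$ and then transfer Theorem~\ref{MR} via Theorem~\ref{thDZ} — but your implementation of the exponential equivalence is considerably heavier than what the paper does, and in one place it relies on a fact you have not verified. The paper never touches the chaos decomposition of the local nodal length and never invokes hypercontractivity: it simply writes
\[
\E\bigl[\lvert\widetilde\Lc_{n;s}-\widetilde\Lc_{n}\rvert^2\bigr]=2-2\,\Corr(\Lc_{n;s},\Lc_{n}),
\]
plugs in the exact identity $\Cov(\Lc_{n;s},\Lc_n)=(\pi s^2)\Var(\Lc_n)$ from \cite{BMW:20} together with the variance asymptotics \eqref{eq:var leading KKW} and \eqref{eq:Var asympt gen} to get $\Corr(\Lc_{n;s},\Lc_n)=1+O_\eps(\mathcal N_n^{-1/2})$, hence a polynomial-in-$\mathcal N_n$ bound on the $L^2$ distance, and then applies a plain Markov (second-moment) tail inequality. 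That crude bound is already enough: under $\alpha_{n}=o(\log\mathcal N_{n})$, any quantity that is $O(\mathcal N_n^{-\gamma})$ for some $\gamma>0$ satisfies $\frac1{\alpha_n}\log O(\mathcal N_n^{-\gamma})\to-\infty$. This is the key observation you seem not to be exploiting — the moderate-deviation speed is so slow that polynomial $L^2$ smallness is already super-exponentially small at speed $\alpha_n$, so there is no need for the sharper $\exp(-c(t/\|F\|_2)^{2/q})$ hypercontractive tails, and no need to isolate the fourth chaos.

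Two further remarks on your version. First, in your step two you assert $\Cov([\Lc_{n;s}]_4,[\Lc_{n}]_4)=(\pi s^2)\Var([\Lc_n]_4)$ and the analogous asymptotic for $\Var([\Lc_{n;s}]_4)$; the identity \cite[(3.36)]{BMW:20} used by the paper is stated for the \emph{full} functionals, and passing to the single fourth-chaos components is a nontrivial additional claim that would need to be extracted from the computations in \cite{BMW:20} — the paper avoids this entirely by comparing the full standardized functionals directly. Second, your first step (chaos-by-chaos tail control for $q\neq 4$) is also unnecessary for the same reason: once $\E[\lvert\widetilde\Lc_{n;s}-\widetilde\Lc_n\rvert^2]$ is polynomially small, Markov handles the whole tail at once. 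In short, your argument is likely to go through after filling the chaos-level covariance gap, but it re-derives by hard tools something that the scaling assumption makes trivially available by $L^2$/Markov; you would lose nothing and gain clarity by dropping the chaos decomposition and the hypercontractive estimates here.
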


Clearly, the sequences of random variables in Theorems \ref{MR}
and
\ref{MR-shrinking} converge to zero (this is a known consequence of the
weak convergence of $\widetilde \Lc_{n_j}$ and $\widetilde \Lc_{n_j,s}$, combined with the
Slutsky Theorem); indeed their common rate function uniquely
vanishes at $y=0$, see also Remark \ref{rem:LDP-consequence}. Then, in both cases, the smaller is $|\eta|$, the
larger
is the rate function $I_\eta$ in (\ref{rate-function}). Then, considering once more
what
we said in Remark \ref{rem:LDP-consequence}, the smaller is $|\eta|$,
the
faster is the convergence to zero.

\begin{remark}\rm
It is worth noticing that the condition on the speed $\alpha_n$ in (\ref{cond:scaling}) may be suboptimal. However, our technique is flexible enough to deal
with other geometric properties of nodal sets of arithmetic random waves, even in higher dimension \cite{Maf:19, Ca:19}. All these interesting cases were left uncovered by the strategy pursued in \cite{MRT:21}, where the spherical counterpart has been investigated and (central) MDPs 
have been established for the nodal length of random spherical harmonics (Gaussian Laplace eigenfunctions on the round sphere) both on the whole manifold and on shrinking spherical caps. It is worth stressing that on the sphere these objects have Gaussian fluctuations in the high-energy limit, and are independent, in marked contrast with the toral case where they show a non-universal non-central asymptotic behavior, and full correlation. See Section \ref{ontheproof} for more details on this comparison. 
\end{remark} 

\subsection{On the proof of the main results}\label{ontheproof}

In order to list and motivate the proof steps, we start by recalling that our nodal length $\mathcal L_n$ in (\ref{e:length}) lives in the Wiener chaos (being a finite variance functional of a Gaussian field), in particular it can be written as an orthogonal series, converging in $L^2(\mathbb P)$, of the form
\begin{equation}\label{serie}
\mathcal L_n = \mathbb E[\mathcal L_n] + \sum_{q=1}^{+\infty} \mathcal L_n[q],
\end{equation}
where $\mathcal L_n[q]$ denotes the orthogonal projection of $\mathcal L_n$ into the so-called $q$-th Wiener chaos. Roughly, this expansion relies on the fact that Hermite polynomials form an orthogonal basis of the space of square integrable functions on the real line with respect to the Gaussian density. It turns out that projections onto \emph{odd} order chaoses vanish, and moreover $\mathcal L_n[2]=0$ (related to so-called Berry's cancellation phenomenon \cite{KKW:13, MPRW:16}).

Our argument relies on the fact that the asymptotic behavior of the (centered) nodal length $\Lc_n-\mathbb E[\mathcal L_n]$ is  completely determined by $\mathcal L_n[4]$, its fourth chaotic component \cite{MPRW:16}, and, moreover, that the dominant term of $\widetilde \Lc_n[4]$ (where $\widetilde \Lc_n[4]$ denotes the standardized fourth chaos of the nodal length), say $\mathcal M_{n}=\mathcal M_{\widehat \mu_n(4)}$ \emph{abusing notation}, is a continuous function $f:\mathbb R^3 \to \mathbb R$ 
of a random vector $\widetilde S_n$ (whose components live in the $2$-nd Wiener chaos), 
$\widetilde S_n$ converging in law towards a multivariate Gaussian. First we establish a Moderate Deviation Principle for $\widetilde S_n/\sqrt{\alpha_n}$ at speed $\alpha_n$, and then, thanks to the celebrated contraction principle (see \cite[Theorem 4.2.1]{DZ:98} as well as Theorem \ref{contractionprinciple}), we transfer this result to $\mathcal M_n/\alpha_n$ with an explicit rate function (indeed, $f$ is a multivariate polynomial) and speed $\alpha_n$. It remains to deal with the tail of the series (\ref{serie}), i.e. with
\begin{equation*}
\widetilde{\mathcal L}_n  - \mathcal M_n.
\end{equation*} 
To this aim, we prove that $\widetilde{\mathcal L}_n/\alpha_n$ and $\mathcal M_n/\alpha_n$ -- under some additional constraint on the speed $\alpha_n$ -- are exponentially equivalent (Definition \ref{expdef}), so that they satisfy the same Deviation Principle (Theorem \ref{thDZ}). 
Finally, we will take advantage of the full correlation result in \cite[Theorem 1.5]{BMW:20} (see also Theorem \ref{thm:toral univ gen})
 to check that $\widetilde{\Lc}_n /\alpha_n$ and $\widetilde{\Lc}_{n;s}/\alpha_n$, the standardized nodal length restricted to the ball $B(s)$ whose radius is slightly above the Planck scale, are exponentially equivalent, thus sharing the same deviations.
 
\subsection{The spherical case}
 
Our proofs are inspired by \cite{MRT:21}, where a Moderate Deviation Principle has been established for the nodal length of \emph{random spherical harmonics}, both on the whole sphere and on shrinking spherical caps. However, there are marked differences with the toral case. Indeed, on the sphere the limiting distribution of these two geometric functionals is Gaussian, and they are asymptotically independent  (see \cite{To:20} and the references therein). On $\mathbb T^2$ instead, the high-energy behavior of the total nodal length is non-Gaussian and non-universal (Theorem \ref{thm:lim dist sep}), and it is asymptotically fully correlated with the nodal length in shrinking domains slightly above the Planck scale (Theorem \ref{thm:toral univ gen}).
Moreover, the rate function $J$ in the main results of \cite{MRT:21} is quadratic
\begin{equation*}
J(y) = y^2/2,\quad y\in \mathbb R,
\end{equation*}
while our rate function \eqref{rate-function} is a line whose angular coefficient depends on $\eta$. However, \emph{the condition (\ref{cond:scaling}) on the MDP speed is comparable in some sense to those on the sphere (see (2.1) and (2.2) in \cite{MRT:21}), indeed $\mathcal N_n$ grows on average as $\sqrt{\log n}$ \cite{La:08}.} 

For the nodal length on the sphere, the starting point is a chaotic expansion similar to (\ref{serie}) and also in this case the dominant term is the fourth chaotic component, that however behaves much differently than the corresponding term on the torus: it is equivalent in the high-energy limit to the so-called sample trispectrum (i.e., the integral of the fourth Hermite polynomial evaluated at the field), which is \emph{not} true in the toral case, and it is asymptotically Gaussian. To prove a MDP for the total nodal length on the sphere, the authors of \cite{MRT:21} first showed a MDP for the sample trispectrum via the cumulant approach presented in \cite{ST16}, which is a link between the fourth moment theorem for Gaussian approximation \cite{NP:12} and the LD theory, and then checked its exponential equivalence with the total nodal length (a similar argument works for shrinking domains).

\section{Chaotic expansions}\label{secWiener}

In this section we recall the chaotic expansion of the nodal length for arithmetic random waves, that is crucial for the proof of our main results. In particular, at the end of Section \ref{secWiener} we focus on the fourth chaotic component  of $\Lc_n$ and its dominant term $\mathcal M_n$. 

\subsection{Wiener chaos}

The family of Hermite polynomials $\lbrace H_q \rbrace_{q\in \mathbb N}$ is defined as follows: $H_0 \equiv 1$ and
 \begin{equation}\label{hermite}
 H_{q}(t) := (-1)^q \phi^{-1}(t) \frac{d^q}{dt^q} \phi(t), \qquad t\in \mathbb R, q\in \mathbb N_{\ge 1},
 \end{equation}
 where $\phi$ denotes the standard Gaussian density. 
It is well known \cite[Proposition 1.4.2]{NP:12} that $ \{H_q/\sqrt{q!} \}_{q\in \mathbb N}$ is a complete orthonormal system in the space of square integrable real functions with respect to the standard Gaussian measure on the real line.

Arithmetic Random Waves \eqref{defT} are generated (see Definition \ref{defarw}) from a family of standard complex-valued Gaussian random variables $\{a_\xi\}_{\xi\in \mathbb{Z}^2}$, defined on  $(\Omega, \mathscr{F}, \mathbb{P})$ and stochastically independent, save for the relations $a_{-\xi} = \overline{a_\xi}$.

Let $X$ be the closure in the Hilbert space $L^2(\mathbb{P})$ (with respect to the scalar product $\langle F, G\rangle := \mathbb E[(F-\mathbb E[F])(G - \mathbb E[G])], F,G\in L^2(\mathbb P))$ of all real finite linear combinations of random variables $\zeta$ of the form $$\zeta = z \, a_\xi + \overline{z} \, a_{-\xi}\,,$$ where $\xi\in \mathbb{Z}^2$ and $z\in \mathbb{C}$,
thus $X$ is a real centered Gaussian Hilbert subspace
of $L^2(\mathbb{P})$.
Now let $q\in \mathbb N$; the $q$-th Wiener chaos $C_q$ associated with $X$ is defined as the closure in $L^2(\mathbb{P})$ of all real finite linear combinations of random variables of the form
$$
H_{p_1}(\zeta_1) H_{p_2}(\zeta_2)\cdots H_{p_k}(\zeta_k)
$$
for $k\in \mathbb N_{\ge 1}$, where $p_1,...,p_k \in \mathbb N$ satisfy $p_1+\cdots+p_k = q$, and $(\zeta_1, \zeta_2, \dots, \zeta_k)$ is a standard Gaussian vector extracted
from $X$ ($C_0 = \mathbb{R}$ and $C_1 = X$). Note that, from (\ref{defT}), for every $n$ the random fields $T_n,\,\nabla T_n$ viewed 
as collections of Gaussian random variables indexed by $x\in \mathbb T^2$ are all lying in $X$.

It turns out that (see e.g. \cite[Theorem 2.2.4]{NP:12}) $C_q \,\bot\, C_{q'}$ in  $L^2(\mathbb{P})$ whenever $q\neq q'$, and moreover
\begin{equation*}
L^2_X(\mathbb{P}) = \bigoplus_{q=0}^\infty C_q,
\end{equation*}
where $L^2_X(\mathbb P) := L^2(\Omega, \sigma(X), \mathbb P)$, that is, every finite-variance real-valued functional $F$ of $X$ admits a unique representation as a series, converging in $L^2_X(\mathbb P)$, of the form
\begin{equation}\label{e:chaos2}
F = \sum_{q=0}^\infty F[q],
\end{equation}
$F[q]:=\text{proj}(F \, | \, C_q)$ being the orthogonal projection of $F$ onto $C_q$ (in particular, $F[0]= \E [F]$).
 For a complete discussion on Wiener chaos see \cite[\S 2.2]{NP:12} and the references therein. 

Since the nodal length $\Lc_n$ is a finite-variance \emph{explicit} functional (\ref{formalLength}) of the Gaussian field $T_n$, its Wiener-\^Ito chaos expansion (\ref{serie})
can be fruitfully exploited, as we will see in the proofs of our main results in Section \ref{sec-proofs}. 

\subsection{Explicit formulas}

The nodal length (\ref{e:length}) can be formally written as
\begin{equation}\label{formalLength}
\mathcal L_n = \int_{\mathbb T} \delta_0(T_n(\theta))\|\nabla T_n(\theta)\|\,d\theta,
\end{equation}
where $\delta_0$ denotes the Dirac delta function and $\|\cdot \|$ the
Euclidean norm in $\R^2$ (see \cite[Lemma 3.1]{RW:08}). Clearly, we mean that the approximating random variables $\mathcal L_n^\epsilon$ defined by
replacing the Dirac mass $\delta_0$ with $1_{[-\epsilon, \epsilon]}/(2\epsilon)$, for $\epsilon >0$, in (\ref{formalLength}), converge a.s. and in $L^2(\mathbb P)$ 
to $\text{length}(T_n^{-1}\lbrace 0 \rbrace)=\mathcal L_n$. 

Note that a straightforward differentiation of the definition \eqref{defT} of $T_n$ yields, for $j=1,2$
\begin{equation}\label{e:partial}
\partial_j T_n(x) = \frac{2\pi i}{\sqrt{\mathcal{N}_n} }\sum_{(\lambda_1,\lambda_2)\in \Lambda_n} \lambda_j a_\lambda e_\lambda(x),
\end{equation}
(here $\partial_j = \frac{\partial}{\partial x_j}$).
Hence the random fields $T_{n},\partial_{1} T_n,\partial_{2} T_n$ viewed as collections of
Gaussian random variables
indexed by $x\in\Tb^2$ are all lying in $X$, i.e. for every $x\in\Tb^2$ we have
\begin{equation*}
T_{n}(x),\, \partial_{1}T_{n}(x), \, \partial_{2}T_{n}(x) \in X.
\end{equation*}

We shall often use the following result from \cite{RW:08}:

\begin{lemma}[\cite{RW:08}, (4.1)]\label{lemmavar}
For $j=1,2$ we have that
$$\displaylines{
\Var[\partial_j T_n(x)] = \frac{4\pi^2}{\mathcal N_n}
\sum_{\lambda=(\lambda_1,\lambda_2)\in \Lambda_n} \lambda_j^2 = 4\pi^2 \frac{n}{2},
}$$
where the derivatives $\partial_j T_n(x)$ are as in \eqref{e:partial}.
\end{lemma}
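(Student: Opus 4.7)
The proof is a direct computation using the covariance structure of the complex Gaussian coefficients combined with a symmetry argument on the frequency set $\Lambda_n$. The plan is to expand $(\partial_j T_n(x))^2$ from the explicit formula \eqref{e:partial}, take expectations term by term, and then exploit the rotational symmetry of $\Lambda_n$ to split the sum of squared coordinates evenly.

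First I would note that $\partial_j T_n(x)$ is a centered real-valued Gaussian random variable (reality follows from $a_{-\lambda} = \overline{a_\lambda}$ together with $\overline{e_\lambda(x)} = e_{-\lambda}(x)$), so
\begin{equation*}
\Var[\partial_j T_n(x)] \;=\; \mathbb{E}\bigl[\partial_j T_n(x)\,\overline{\partial_j T_n(x)}\bigr]\;=\; \frac{4\pi^2}{\mathcal N_n}\sum_{\lambda,\mu\in\Lambda_n}\lambda_j\mu_j\,\mathbb{E}[a_\lambda \overline{a_\mu}]\,e_{\lambda-\mu}(x).
\end{equation*}
The key input is the covariance relation $\mathbb{E}[a_\lambda \overline{a_\mu}]=\delta_{\lambda,\mu}$. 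This is immediate from Definition \ref{defarw}: if $\mu\notin\{\lambda,-\lambda\}$ then independence of $a_\lambda$ and $a_\mu$ together with centering gives $0$; if $\mu=\lambda$ one gets $\mathbb{E}|a_\lambda|^2=1$ since $a_\lambda$ is standard complex Gaussian; and if $\mu=-\lambda$ then $\overline{a_\mu}=a_\lambda$, so the expectation is $\mathbb{E}[a_\lambda^2]=0$ (the standard complex Gaussian has vanishing pseudo-variance).

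Plugging this in collapses the double sum to a single sum and kills the $x$-dependence, giving
\begin{equation*}
\Var[\partial_j T_n(x)] \;=\; \frac{4\pi^2}{\mathcal N_n}\sum_{\lambda\in\Lambda_n}\lambda_j^2,
\end{equation*}
which is the first claimed identity. For the second, I would use the symmetry $(\lambda_1,\lambda_2)\in\Lambda_n \iff (\lambda_2,\lambda_1)\in\Lambda_n$ (since $\Lambda_n$ is defined by $\lambda_1^2+\lambda_2^2=n$), which yields $\sum_{\lambda}\lambda_1^2=\sum_{\lambda}\lambda_2^2$. Combining this with $\sum_{\lambda\in\Lambda_n}(\lambda_1^2+\lambda_2^2)=n\,\mathcal N_n$ gives $\sum_{\lambda\in\Lambda_n}\lambda_j^2 = n\,\mathcal N_n/2$ for $j=1,2$, and substituting yields the value $4\pi^2\,n/2$.

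There is no real obstacle in this argument; the only point one has to be mindful of is the treatment of the $\mu=-\lambda$ diagonal term, where the Hermitian constraint $a_{-\lambda}=\overline{a_\lambda}$ could in principle contribute but is eliminated by the vanishing of the pseudo-variance $\mathbb{E}[a_\lambda^2]=0$ of a standard complex Gaussian. Everything else is elementary bookkeeping with the explicit formula \eqref{e:partial}.
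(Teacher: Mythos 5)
Your proof is correct. The paper itself does not prove this lemma; it simply cites it from Rudnick--Wigman \cite{RW:08}, equation (4.1). Your direct computation is the standard one: expanding the double sum from \eqref{e:partial}, using $\mathbb{E}[a_\lambda\overline{a_\mu}]=\delta_{\lambda,\mu}$ (the off-diagonal terms vanish by independence, the $\mu=-\lambda$ term vanishes because the standard complex Gaussian has zero pseudo-variance), and then splitting $\sum_{\lambda}(\lambda_1^2+\lambda_2^2)=n\,\mathcal N_n$ evenly via the coordinate-swap symmetry of $\Lambda_n$. Your care with the $\mu=-\lambda$ case is exactly the point that could trip someone up, and you handled it correctly.
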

Accordingly, for $x=(x_1, x_2)\in \mathbb T$ and $j=1,2$, we will denote by $\partial_j \widetilde T_n(x)$ the normalized derivative
\begin{equation}\label{e:norma}
\partial_j \widetilde T_n(x) := \frac{1}{2\pi} \sqrt{\frac{2}{n}} \frac{\partial}{\partial x_j}  T_n(x) = \sqrt{\frac{2}{n}}\frac{ i}{\sqrt{\mathcal N_n}}\sum_{ \lambda\in \Lambda_n}\lambda_j\,
a_{\lambda}e_\lambda(x).
\end{equation}
In view of convention \eqref{e:norma}, we formally rewrite \eqref{formalLength} as
\begin{equation*}\label{formalLength2}
\mathcal L_n = \sqrt{\frac{4\pi^2n}{2}}\int_{\mathbb T}
\delta_0(T_n(x)) \sqrt{(\partial_1 \widetilde
T_n(x))^2+(\partial_2 \widetilde T_n(x))^2}\,dx.
\end{equation*}
We also introduce two collections of coefficients
$\{\alpha_{2n,2m} : n,m\geq 1\}$ and $\{\beta_{2l} : l\geq 0\}$, that are related to the Hermite expansion of the norm $\| \cdot
\|$ in $\R^2$ and the (formal) Hermite expansion of the Dirac mass $ \delta_0(\cdot)$, respectively.
These are given by
\begin{equation}\label{e:beta}
\beta_{2l}:= \frac{1}{\sqrt{2\pi}}H_{2l}(0),
\end{equation}
where $H_{2l}$ denotes the $2l$-th Hermite polynomial, and
\begin{equation}\label{e:alpha}
\alpha_{2n,2m}=\sqrt{\frac{\pi}{2}}\frac{(2n)!(2m)!}{n!
m!}\frac{1}{2^{n+m}} p_{n+m}\left (\frac14 \right),
\end{equation}
where for $N\in \mathbb N$ and $x\in \R$
\begin{equation*}
\displaylines{ p_{N}(x) :=\sum_{j=0}^{N}(-1)^{j}\cdot(-1)^{N}{N
\choose j}\ \ \frac{(2j+1)!}{(j!)^2} x^j. }
\end{equation*}

\begin{proposition}[\cite{MPRW:16}, Proposition 3.2]
\label{teoexp}
\
{\rm (a)} For $q=2$ or $q=2m+1$ odd ($m\ge 0$),
\begin{equation*}\label{point1}
\Lc_n[q] \equiv 0,
\end{equation*}
that is, the corresponding chaotic projection vanishes.

{\rm (b)} For $q=0$ or $q\ge 2$
\begin{eqnarray}\label{e:pp}
\nonumber &&\Lc_n[2q]\\
&&= \sqrt{\frac{4\pi^2n}{2}}\sum_{u=0}^{q}\sum_{k=0}^{u}
\frac{\alpha _{2k,2u-2k}\beta _{2q-2u}
}{(2k)!(2u-2k)!(2q-2u)!} \times\\
&&\hspace{4cm}  \times \int_{\mathbb T}\!\! H_{2q-2u}(T_n(x))
H_{2k}(\partial_1 \widetilde T_n(x))H_{2u-2k}(\partial_2
\widetilde T_n(x))\,dx.\notag
\end{eqnarray}
%In particular,
%\begin{equation*}
%{\rm proj}(\Lc_n\, | \, C_{4}) = ....
%\end{equation*}
Consolidating the above, the Wiener-It\^o chaotic expansion of $\Lc_n$ is
\begin{eqnarray*}%\label{chaosexp}
\Lc_n = \E [\Lc_n ]+ \sqrt{\frac{4\pi^2n}{2}}\sum_{q=2}^{+\infty}\sum_{u=0}^{q}\sum_{k=0}^{u}
\frac{\alpha _{2k,2u-2k}\beta _{2q-2u}
}{(2k)!(2u-2k)!(2q-2u)!}\times\\
\nonumber
\times \int_{\mathbb T}H_{2q-2u}(T_n(x))
H_{2k}( \widetilde \partial_1 T_n(x))H_{2u-2k}( \widetilde \partial_2
 T_n(x))\,dx,
\end{eqnarray*}
in $L^2(\P)$.
\end{proposition}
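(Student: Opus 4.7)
The plan is to start from the integral representation \eqref{formalLength} regularized in the standard way: for $\epsilon>0$, define
\begin{equation*}
\mathcal L_n^\epsilon := \int_{\mathbb T^2} \frac{1}{2\epsilon}\mathbf{1}_{[-\epsilon,\epsilon]}(T_n(x)) \, \|\nabla T_n(x)\|\,dx,
\end{equation*}
and note, as remarked below \eqref{formalLength}, that $\mathcal L_n^\epsilon \to \mathcal L_n$ in $L^2(\mathbb P)$ as $\epsilon\downarrow 0$. Since orthogonal projection onto any Wiener chaos $C_q$ is continuous in $L^2(\mathbb P)$, it suffices to compute $\mathcal L_n^\epsilon[q]$ explicitly, identify the limit as $\epsilon\downarrow 0$, and then pass the series expansion to the limit. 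At a fixed point $x\in\mathbb T^2$, the Gaussian vector $(T_n(x),\partial_1 T_n(x),\partial_2 T_n(x))$ has covariance with zeros on the last two entries of the first row/column (differentiating $r_n$ at the origin gives $\nabla r_n(0)=0$), so $T_n(x)$ is \emph{independent} of $\nabla T_n(x)$. This pointwise independence is what will decouple the two Hermite expansions.

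Next, I would expand the two scalar functions separately in the Gaussian $L^2$-basis. On $(\mathbb R,\gamma_1)$ the even function $t\mapsto \frac{1}{2\epsilon}\mathbf{1}_{[-\epsilon,\epsilon]}(t)$ expands only on even Hermite polynomials, with coefficients converging as $\epsilon\downarrow 0$ to $\beta_{2l}$ defined in \eqref{e:beta}; on $(\mathbb R^2,\gamma_2)$ the (also even) function $(y_1,y_2)\mapsto\|(y_1,y_2)\|$ expands in the product basis $\{H_{2k}(y_1)H_{2m}(y_2)\}$ with coefficients $\alpha_{2k,2m}$ given by \eqref{e:alpha}, a computation that goes through polar coordinates and yields the polynomial $p_{n+m}$. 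Substituting $T_n(x)$ and $(\partial_1\widetilde T_n(x),\partial_2\widetilde T_n(x))$ (the latter having standard covariance thanks to the normalization of Lemma \ref{lemmavar} and the isotropy that makes the two components uncorrelated), using independence to multiply the two series, and integrating term-by-term in $x$, produces a candidate formal expansion whose summand with total Hermite degree $2q$ is exactly the right-hand side of \eqref{e:pp}. Since each such summand is a finite linear combination of integrals of products of Hermite polynomials with total degree $2q$ of elements of $X$, it lies in $C_{2q}$, so by orthogonality it coincides with $\mathcal L_n[2q]$. Convergence of the partial sums in $L^2(\mathbb P)$ has to be checked, but once the global $L^2$ equality $\mathcal L_n=\sum_q \mathcal L_n[q]$ from \eqref{e:chaos2} is invoked, this is automatic.

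The vanishing statements in (a) are handled separately. For odd $q$, observe that $\mathcal L_n$ is a measurable function of $T_n$ that satisfies $\mathcal L_n(-T_n)=\mathcal L_n(T_n)$ (the nodal set is unchanged by a sign flip of the field); combined with the symmetry of the law of $T_n$, this forces all projections onto odd chaoses to vanish, since the sign flip acts on $C_q$ as multiplication by $(-1)^q$. The deeper statement $\mathcal L_n[2]\equiv 0$, which is Berry's cancellation, I would prove by direct computation using the formula in (b) for $q=1$: there are only three summands, proportional respectively to $\int_{\mathbb T^2}H_2(T_n(x))\,dx$, $\int_{\mathbb T^2}H_2(\partial_1\widetilde T_n(x))\,dx$, $\int_{\mathbb T^2}H_2(\partial_2\widetilde T_n(x))\,dx$, with coefficients determined by $\beta_0,\beta_2,\alpha_{0,0},\alpha_{2,0},\alpha_{0,2}$. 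Exploiting $H_2(t)=t^2-1$, each of those spatial integrals can be written in terms of $|a_\lambda|^2$, and the overall combination collapses to zero thanks to $\sum_{\lambda\in\Lambda_n}\lambda_1^2=\sum_{\lambda\in\Lambda_n}\lambda_2^2=n\mathcal N_n/2$ (Lemma \ref{lemmavar}) together with the explicit evaluations $\beta_2=-1/\sqrt{2\pi}$ and the identities for $\alpha_{0,0},\alpha_{2,0},\alpha_{0,2}$ coming from $p_0(1/4)$ and $p_1(1/4)$.

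The main obstacle is the last step: algebraically the Berry cancellation is a single numerical identity, but it is the only place where the specific values of $\alpha_{2k,2m}$ and $\beta_{2l}$ really intervene rather than their mere existence. The rest of the argument is of a structural nature (regularization, independence, orthogonality of chaoses), while this cancellation is what promotes the dominant chaos of $\mathcal L_n-\mathbb E[\mathcal L_n]$ from the second to the fourth and is ultimately responsible for the non-central limiting law $\mathcal M_\eta$ in \eqref{e:r}, for Theorem \ref{thm:lim dist sep}, and for our MDPs.
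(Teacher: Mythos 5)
Your outline is correct, and it is essentially the argument given in the cited reference [MPRW:16] (the present paper does not re-prove Proposition~\ref{teoexp}, it simply quotes it). The structure you describe — regularize $\delta_0$ by $\frac{1}{2\epsilon}\mathbf{1}_{[-\epsilon,\epsilon]}$, exploit that $T_n(x)$ and $\nabla T_n(x)$ are independent at each fixed $x$ (since $\nabla r_n(0)=0$) and that $\partial_1 T_n(x)\perp\partial_2 T_n(x)$ (since $\sum_{\lambda}\lambda_1\lambda_2=0$), expand the indicator and the Euclidean norm separately in even Hermite polynomials with coefficients $\beta_{2l}/(2l)!$ and $\alpha_{2k,2m}/((2k)!(2m)!)$, take the product and integrate term by term, then pass to the limit $\epsilon\downarrow 0$ using $L^2$-continuity of chaos projections — is the proof of [MPRW:16, Proposition 3.2]. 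Your treatment of part~(a) is also sound: the sign-flip $T_n\mapsto -T_n$ leaves $\mathcal L_n$ invariant and acts on $C_q$ by $(-1)^q$, which kills all odd chaoses (equivalently, only even Hermite degrees appear in your expansion), while the degree-$2$ term is handled by the direct computation you indicate. For that last step one has $\beta_0=1/\sqrt{2\pi}$, $\beta_2=-1/\sqrt{2\pi}$, $\alpha_{0,0}=\sqrt{\pi/2}$, $\alpha_{2,0}=\alpha_{0,2}=\tfrac12\sqrt{\pi/2}$ (since $p_0(1/4)=1$, $p_1(1/4)=1/2$), and, using $\int H_2(T_n)=\frac{1}{\mathcal N_n}\sum_\lambda(|a_\lambda|^2-1)$ and $\int H_2(\partial_j\widetilde T_n)=\frac{2}{n\mathcal N_n}\sum_\lambda\lambda_j^2(|a_\lambda|^2-1)$ together with $\lambda_1^2+\lambda_2^2=n$, the three contributions to $\Lc_n[2]$ cancel exactly, as you claim. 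The only place you are slightly loose is the assertion that $L^2$-convergence of the full chaotic series is ``automatic''; strictly speaking one first identifies $\mathcal L_n[2q]$ as the $L^2$-limit of $\mathcal L_n^\epsilon[2q]$, and then the series converges in $L^2$ because it is the Wiener--It\^o decomposition of the square-integrable functional $\mathcal L_n$. This is minor and does not affect the validity of your argument.
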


\subsubsection{The fourth chaotic component}

For $n\in S$, we set
\begin{eqnarray}
W(n):=\left(
\begin{array}{c}
W_{1}(n) \\
W_{2}(n) \\
W_{3}(n) \\
W_{4}(n)
\end{array}%
\right ) & :=& \frac{1}{n\sqrt{\Nc_{n}/2}}\sum_{\lambda = (\lambda_1,\lambda_2)\in \Lambda^+_{n}  }\left(
|a_{\lambda }|^{2}-1\right) \left(
\begin{array}{c}\label{Wdef}
n \\
\lambda _{1}^{2} \\
\lambda _{2}^{2} \\
\lambda _{1}\lambda _{2}%
\end{array}%
\right),
\end{eqnarray}
where $\Lambda_n^+$ is defined as in (\ref{Lambdaplus}).
Starting from the formula \eqref{e:pp} in the case $q=2$, the authors of \cite{MPRW:16} show the following.

\begin{lemma}[Lemma 4.2, \cite{MPRW:16}, Lemma 4, \cite{PR:18}]\label{p:formula4}
We have, for diverging subsequences $\lbrace n_j \rbrace \subseteq S$ such that $\mathcal N_{n_j}\sto +\infty$ and $\widehat \mu_{n_j}(4)$ converges,
\begin{equation*}
\mathcal{L}_{n_j}[4]=\sqrt{\frac{E_{n_j}}{ 512\, \mathcal N^2_{n_j}}}\Big(W_1(n_j) ^2-2W_2(n_j)^2-2W_3(n_j)^2 - 4W_4(n_j)^2+ R(n_j) \Big),
\end{equation*}
where 
$$
R(n_j)=\frac{1}{2\mathcal N_{n_j}}\sum_{\lambda \in \Lambda_{n_j}} \abs{a_\lambda}^4
$$ 
is a sequence of random variables converging in probability to $1$.
\end{lemma}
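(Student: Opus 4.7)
The starting point is the explicit chaotic decomposition in Proposition \ref{teoexp} applied at $q=2$, which writes $\mathcal L_n[4]$ as a short finite sum of integrals
$$\int_{\mathbb T^2} H_{2r}(T_n(x))\, H_{2s}(\partial_1 \widetilde T_n(x))\, H_{2t}(\partial_2 \widetilde T_n(x))\,dx, \qquad r+s+t=2,$$
each weighted by an explicit combinatorial factor $\alpha_{2s,2t}\beta_{2r}/((2r)!(2s)!(2t)!)$ coming from \eqref{e:beta}--\eqref{e:alpha}. The goal is to rewrite this combination as the quadratic form in $W_1(n),\ldots,W_4(n)$ claimed in the lemma, plus the remainder $R(n)$.

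I would proceed by plugging in the Fourier expansions \eqref{defT} and \eqref{e:norma} of $T_n$ and $\partial_j \widetilde T_n$, and expanding Hermite polynomials of Gaussian sums via the product formula, so that each integrand becomes a quadruple sum of products $a_{\lambda^{(1)}}\cdots a_{\lambda^{(4)}}$ weighted by a monomial in the coordinates $\lambda^{(i)}_1, \lambda^{(i)}_2$ and by the exponential $e_{\lambda^{(1)}+\cdots+\lambda^{(4)}}(x)$. Integration over $\mathbb T^2$ kills every term except those lying on the $4$-correlation set $\mathcal C_4(n):=\{(\lambda^{(1)},\ldots,\lambda^{(4)})\in \Lambda_n^4 : \lambda^{(1)}+\cdots+\lambda^{(4)}=0\}$, which splits into \emph{degenerate} configurations (where the four frequencies pair up as opposites $\lambda^{(i)}=-\lambda^{(j)}$) and \emph{non-degenerate} ones, whose number is controlled by $|\widehat \mu_n(4)|^2 \mathcal N_n^4$.

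The degenerate contributions with two distinct unordered pairs $\{\lambda,-\lambda\}\ne\{\mu,-\mu\}$ factor as products of independent sums $\sum_{\lambda \in \Lambda_n^+}(|a_\lambda|^2-1)\,P(\lambda)$ with $P(\lambda)\in\{n,\lambda_1^2,\lambda_2^2,\lambda_1\lambda_2\}$. Thanks to Lemma \ref{lemmavar} and after the normalization $\sqrt{E_n/(512\mathcal N_n^2)}$, these reassemble exactly into the quadratic form $W_1(n)^2-2W_2(n)^2-2W_3(n)^2-4W_4(n)^2$ via \eqref{Wdef}. The fully diagonal pieces, where all four frequencies coincide up to sign, produce terms proportional to $|a_\lambda|^4$ which, after collecting the prefactors, combine into $R(n)$. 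The non-degenerate $4$-correlations give only a lower-order additive error (gaining a factor $|\widehat \mu_n(4)|^2$ against an $o(1)$ prefactor after normalization) that can be absorbed into the $R(n)$ remainder.

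The main technical obstacle is the combinatorial bookkeeping needed to pin down the exact numerical coefficients $(1,-2,-2,-4)$ in front of $W_i(n)^2$: this requires combining four a priori distinct chaos contributions coming from $H_4(T_n)$, $H_2(T_n)H_2(\partial_j\widetilde T_n)$, $H_2(\partial_j\widetilde T_n)^2$ and $H_2(\partial_1\widetilde T_n)H_2(\partial_2\widetilde T_n)$, tracking how the $\alpha$-$\beta$ coefficients interact with the moment identity $\sum_{\lambda\in\Lambda_n}\lambda_j^2=n\mathcal N_n/2$. Finally, the convergence $R(n_j)\to 1$ in probability is a one-line Chebyshev estimate: since each $|a_\lambda|^2$ is exponentially distributed with mean one, $\E[|a_\lambda|^4]=2$ and $\Var(|a_\lambda|^4)$ is a finite absolute constant, so the independence of the $\{a_\lambda\}_{\lambda\in\Lambda_n^+}$ yields $\E[R(n_j)]=1$ and $\Var(R(n_j))=O(1/\mathcal N_{n_j})\to 0$.
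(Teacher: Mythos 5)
Your overall scheme is the right one: start from Proposition \ref{teoexp} at $q=2$, expand via the Fourier representations \eqref{defT} and \eqref{e:norma}, integrate over $\mathbb T^2$ so that only frequency $4$-tuples summing to zero survive, and then reassemble into the $W_i$'s. Your Chebyshev bound for $R(n_j)\to 1$ in probability is also correct. However, the middle of your argument contains a substantive error. You assert that the correlation set $\mathcal C_4(n)$ ``splits into degenerate configurations \ldots\ and non-degenerate ones, whose number is controlled by $|\widehat\mu_n(4)|^2\mathcal N_n^4$'', and that the non-degenerate part contributes a lower-order error ``absorbed into the $R(n)$ remainder''. This is wrong on three counts, and each matters.

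First, the non-degenerate part of $\mathcal C_4(n)$ is \emph{empty}: four vectors of equal Euclidean norm in $\mathbb R^2$ summing to zero necessarily form two antipodal pairs (the classical observation going back to Zygmund, used throughout \cite{KKW:13,MPRW:16,PR:18}). This is precisely what makes the stated identity an \emph{exact} algebraic equality rather than an asymptotic one: every surviving monomial in $\int H_{2r}(T_n)H_{2s}(\partial_1\widetilde T_n)H_{2t}(\partial_2\widetilde T_n)\,dx$ factors as $|a_\lambda|^2|a_\mu|^2$ (possibly with $\lambda=\pm\mu$), and once rewritten in terms of $(|a_\lambda|^2-1)(|a_\mu|^2-1)$ plus diagonal pieces, these exhaust the quadratic form $W_1^2-2W_2^2-2W_3^2-4W_4^2$ together with $R(n)$, with nothing left over. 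Second, $R(n)=\frac1{2\mathcal N_n}\sum_{\lambda\in\Lambda_n}|a_\lambda|^4$ is an explicit random variable appearing in an equality between two explicit expressions; it is not a residual into which an unspecified error can be swept, and if the non-degenerate $4$-correlations you posit actually contributed, the stated identity would simply be false. Third, $\widehat\mu_n(4)$ does not count $4$-correlations at all; it enters only through fourth-moment sums such as $\frac1{n^2\mathcal N_n}\sum_{\lambda\in\Lambda_n}\lambda_1^4=\frac{3+\widehat\mu_n(4)}{8}$, i.e.\ in the variance structure of the $W_i$'s, not in the cardinality of $\mathcal C_4(n)$. The missing ingredient in your proof is the degeneracy of the length-$4$ spectral correlation set; without it your argument yields the formula only up to an uncontrolled fourth-chaos error, which is not what the lemma claims.
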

It is crucial to note that (see \cite[(46)]{PR:18})
$$
W_2(n)+W_3(n)=W_1(n)\,,
$$
which implies that the fourth chaotic component of ${\Lc}_{n_j}$ can be written as 
\begin{equation}\label{4chaosred}
\mathcal{L}_{n_j}[4]=\sqrt{\frac{E_{n_j}}{ 512\, \mathcal N^2_{n_j}}}\Big(\(W_2(n_j)+W_3(n_j)\)^2-2W_2(n_j)^2-2W_3(n_j)^2 - 4W_4(n_j)^2+ R(n_j) \Big)\,.
\end{equation}
We will use also the following important Lemma from \cite{MPRW:16}.
\begin{lemma}[Lemma 4.3, \cite{MPRW:16}]\label{p:clt} Assume that the subsequence $\{n_j\}_j\subseteq S$ is such that $\Nc_{n_j} \sto +\infty$ and $\widehat{\mu}_{n_j}(4) \sto \eta\in [-1,1]$.
%and denote $\psi=\psi(\eta)$ as in \eqref{e:pzi}.
Then, as $n_j\sto \infty$, the following CLT holds:
\begin{eqnarray}\label{e:punz}
W(n_j) \stackrel{\rm d}{\longrightarrow} Z(\eta)= \left(
\begin{array}{c}
Z_{1} \\
Z_{2} \\
Z_{3} \\
Z_{4}%
\end{array}%
\right ),
\end{eqnarray}
where $Z(\eta)$ is a centered Gaussian vector with covariance
\begin{equation}\label{e:sig}
\Gamma=\Gamma(\eta) =\left(
\begin{array}{cccc}
1 & \frac{1}{2} & \frac{1}{2} & 0 \\
\frac{1}{2} & \frac{3+\eta}{8}  & \frac{1-\eta}{8}  & 0 \\
\frac{1}{2} & \frac{1-\eta}{8}  & \frac{3+\eta}{8}  & 0 \\
0 & 0 & 0 & \frac{1-\eta}{8}
\end{array}%
\right).
\end{equation}%
The eigenvalues of $\Gamma$ are $ 0,\frac{3}{2},%
\frac{1-\eta}{8},\frac{1+\eta}{4}$, in particular $\Gamma$ is singular.
\end{lemma}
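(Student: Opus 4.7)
The plan is to recast $W(n_j)$ as a normalized sum of independent, bounded, $\mathbb{R}^4$-valued summands indexed by $\Lambda_{n_j}^+$, and invoke a multivariate Lindeberg--Feller (equivalently Lyapunov) central limit theorem. Since $\{a_\lambda\}_{\lambda\in\Lambda_{n_j}^+}$ are mutually independent standard complex Gaussians, the random variables $|a_\lambda|^2-1$ are i.i.d.\ centred with unit variance and with all moments finite, and each summand
$$
X_{\lambda,n_j}:=\frac{1}{n_j\sqrt{\mathcal{N}_{n_j}/2}}\,(|a_\lambda|^2-1)\,(n_j,\lambda_1^2,\lambda_2^2,\lambda_1\lambda_2)^T
$$
has deterministic coefficients of size $O(1/\sqrt{\mathcal{N}_{n_j}})$ (since $|\lambda_j^2/n_j|\le1$ and $|\lambda_1\lambda_2/n_j|\le1/2$). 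Once the convergence of the covariance matrix of $W(n_j)$ to $\Gamma(\eta)$ is established, the Lyapunov condition at exponent $3$ is automatic: the third absolute moment of each summand is $O(\mathcal{N}_{n_j}^{-3/2})$, so the Lyapunov ratio is $\mathcal{N}_{n_j}\cdot\mathcal{N}_{n_j}^{-3/2}=\mathcal{N}_{n_j}^{-1/2}\to 0$.

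For the limiting covariance, I would combine the sum over $\Lambda_{n_j}^+$ with its reflection $-\Lambda_{n_j}^+$ (using that the rank-one outer product $v_\lambda v_\lambda^T$ with $v_\lambda:=(n_j,\lambda_1^2,\lambda_2^2,\lambda_1\lambda_2)^T$ is invariant under $\lambda\mapsto-\lambda$), so that
$$
\mathrm{Cov}(W(n_j))_{ij}=\frac{1}{n_j^2\mathcal{N}_{n_j}}\sum_{\lambda\in\Lambda_{n_j}}\!\!v_\lambda^{(i)}v_\lambda^{(j)}.
$$
Parametrising $\lambda=\sqrt{n_j}(\cos\theta_\lambda,\sin\theta_\lambda)$ and expanding each product $\cos^a\theta\sin^b\theta$ in terms of $1,\cos 2\theta,\sin 2\theta,\cos 4\theta,\sin 4\theta$ reduces these entries to linear combinations of the Fourier coefficients $\widehat{\mu}_{n_j}(2k)$ for $k=0,1,2$. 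The symmetry $(\lambda_1,\lambda_2)\mapsto(\lambda_1,-\lambda_2)$ of $\Lambda_{n_j}$ kills every sine-average and shows $\widehat{\mu}_{n_j}(4)\in\mathbb{R}$, while the $90^\circ$-rotation symmetry $(\lambda_1,\lambda_2)\mapsto(-\lambda_2,\lambda_1)$ annihilates $\widehat{\mu}_{n_j}(2)$. The only surviving nontrivial Fourier coefficient is $\widehat{\mu}_{n_j}(4)\to\eta$, and an entry-by-entry check then produces exactly $\Gamma(\eta)$ of \eqref{e:sig}.

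The eigenvalue list drops out from the block structure: the fourth coordinate decouples (all off-diagonal entries $(i,4)$ vanish), immediately yielding $(1-\eta)/8$ from the $1\times1$ block. For the $3\times 3$ block, the deterministic identity $W_1(n)=W_2(n)+W_3(n)$, visible from \eqref{Wdef} via $\lambda_1^2+\lambda_2^2=n$, forces $(1,-1,-1,0)^T\in\ker\Gamma$ and hence produces the eigenvalue $0$ (the source of the singularity of $\Gamma$). The reflection symmetry $\lambda_1\leftrightarrow\lambda_2$ splits the remaining two-dimensional invariant subspace: the antisymmetric direction $(0,1,-1)^T$ gives eigenvalue $(1+\eta)/4$ by direct computation, and the orthogonal symmetric direction gives $3/2$, forced by the trace identity $1+2\cdot(3+\eta)/8=0+(1+\eta)/4+3/2$. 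The only real bookkeeping obstacle is the factor-of-two produced when passing from the half-sum over $\Lambda_{n_j}^+$ (of cardinality $\mathcal{N}_{n_j}/2$) to the symmetric sum over $\Lambda_{n_j}$, together with the harmless boundary case $\sqrt{n_j}\in\mathbb{Z}$ in which the extra element $(\sqrt{n_j},0)\in\Lambda_{n_j}^+$ is unpaired; both effects contribute at most $O(1/\mathcal{N}_{n_j})$ to the covariance and do not affect the limit.
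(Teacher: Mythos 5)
Your argument is correct and self-contained. Writing $W(n_j)$ as a normalized sum over $\lambda\in\Lambda_{n_j}^+$ of independent summands, each a bounded deterministic vector times the centred, unit-variance variable $|a_\lambda|^2-1$, puts it squarely in triangular-array form, and the Lyapunov condition at exponent $3$ is verified exactly as you describe once the covariance limit is in hand (the singularity of $\Gamma$ is no obstacle: the Lindeberg--Feller theorem tolerates a degenerate limit). The covariance computation via the dihedral symmetries of $\Lambda_{n_j}$ --- which kill all sine averages, annihilate $\widehat{\mu}_{n_j}(2)$, and force $\widehat{\mu}_{n_j}(4)\in\R$ --- together with $\lambda_1^2+\lambda_2^2=n_j$ is correct, and so is the eigenvalue analysis: the kernel direction $(1,-1,-1,0)^T$ reflects the deterministic identity $W_1=W_2+W_3$, the decoupled fourth coordinate gives $(1-\eta)/8$, the antisymmetric direction $(0,1,-1,0)^T$ gives $(1+\eta)/4$, and the trace pins down $3/2$ on the remaining symmetric direction $(2,1,1,0)^T$. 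One cosmetic correction: the passage from $\Lambda_{n_j}^+$ to $\Lambda_{n_j}$ is exact rather than an $O(1/\Nc_{n_j})$ approximation, since $v_\lambda=v_{-\lambda}$ for every $\lambda$ and by construction $\Lambda_{n_j}$ is the disjoint union of $\Lambda_{n_j}^+$ and $-\Lambda_{n_j}^+$; in the case $\sqrt{n_j}\in\Z$ the element $(\sqrt{n_j},0)\in\Lambda_{n_j}^+$ is not unpaired, its partner $(-\sqrt{n_j},0)$ simply lives in $-\Lambda_{n_j}^+$, so there is no boundary discrepancy to estimate away.

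The paper at hand merely cites this as Lemma 4.3 of \cite{MPRW:16} without reproducing the proof, so there is no internal argument to compare against. In the Wiener-chaos framework of \cite{MPRW:16}, where the components of $W(n_j)$ live in the second chaos, the natural tool is the Peccati--Tudor multivariate fourth moment theorem, reducing the CLT to checking that certain fourth cumulants or contraction norms vanish. Your Lindeberg--Lyapunov route is more elementary, bypassing Malliavin--Stein machinery entirely; it relies on the extra structure that $W(n_j)$ is an honest sum of independent terms, which is precisely available here, and reaches the same conclusion.
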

The fourth chaotic component $\Lc_n[4]$ is the dominating term in the series expansion (found in Proposition \ref{teoexp}) of the total nodal length $\Lc_n$. Indeed, in \cite{MPRW:16} it was proved that, as $\mathcal N_n\sto +\infty$,
\begin{equation}\label{lim var}
\text{Var}(\mathcal L_n)\sim \text{Var}(\mathcal L_n[4]),
\end{equation}
by showing that the asymptotic variance of $\mathcal L_n[4]$ equals the right hand side of \eqref{eq:var leading KKW}. Indeed, 
\begin{equation}\label{varesatta4chaos}
\text{Var}(\mathcal L_n[4]) = \frac{E_n}{512 \mathcal N_n^2} \left (1 + \widehat \mu_n(4)^2 + \frac{34}{\mathcal N_n} \right ).
\end{equation}
The asymptotic equality in \eqref{lim var} and the orthogonality properties of Wiener chaoses, that we have seen  in Section \ref{secWiener}, guarantee that, as $\mathcal N_n\sto +\infty$,
\begin{equation}\label{= law}
\frac{\mathcal L_n - \mathbb E[\mathcal L_n]}{\sqrt{\text{Var}(\mathcal L_n)}} = \frac{\mathcal L_n[4]}{\sqrt{\text{Var}(\mathcal L_n[4])}} + o_{\mathbb P}(1),
\end{equation}
where $o_{\mathbb P}(1)$ denotes a sequence converging to $0$ in probability. 

Moreover, thanks to \cite[Lemma 2]{PR:18}, which was proved using a powerful result by Bombieri and Bourgain on the ``sums of two squares'' problem \cite[Theorem 1]{BB:14}, we know that
\begin{equation}\label{PR-bound}
\E\[ \abs{\Lc_{n}-\Lc_{n}[4]}^2\]=O\left (\frac{E_n}{\mathcal  N_n^{5/2}}\right )\,.
\end{equation}

\section{Proofs}\label{sec-proofs}

In the present section we give the proofs of our main results.

\subsection{Proof of Theorem \ref{MR}}

From now on $\{n_j\}_j\subseteq S$ will denote a sequence of energy levels such that $\mathcal N_{n_j}\sto+\infty$ and $\widehat{\mu}_{n_j}(4)\sto\eta\in[-1,1]$ as $j\sto+\infty$: for the sake of notation brevity we will write $n$ instead of $n_j$ in the sequel. 

As anticipated in Section \ref{ontheproof}, the proof of Theorem \ref{MR} is divided into three steps. The first step is a MDP for a random vector, called $S_n$, whose components are linear combinations of independent and centered chi-square random variables, see Section \ref{step1}. Then we will show that our functional of interest, which is the nodal length $\Lc_n$ is \emph{exponentially equivalent} to a simpler functional that we will call $\mathcal M_n$, see Section \ref{step2}. In the third  and  final step Section \ref{step3}, we will prove through a contraction principle that $\mathcal M_n$ (and hence $\Lc_n$), which is a continuous function of $S_n$, satisfies a MDP with rate function as in \eqref{rate-function}.

\subsubsection{MDP for $S_n$}\label{step1}
Recalling the content of Section \ref{secWiener}, we define 
\begin{equation}\label{defSn}
S_n:=\sum_{\lambda\in\Lambda_{n}^+}\(\abs{a_\lambda}^2-1\) \( \begin{array}{c}
\lambda_1^2/n  \\
\lambda_2^2/n  \\
\lambda_1\lambda_2/n \end{array} \)\,;
\end{equation}
in particular $S_n$ is a linear combination of independent and centered chi-square random variables, where the coefficients are three-dimensional. 
From Lemma \ref{p:clt} which is \cite[Lemma 4.3]{MPRW:16}, we know that 
\begin{equation}\label{conv-Sn}
\frac{S_n}{\sqrt{\mathcal N_{n}/2}}= \( \begin{array}{c}
W_2(n)  \\
W_3(n) \\
W_4(n) \end{array} \) \mathop{\to}^{\rm d} \( \begin{array}{c}
Z_2  \\
Z_3 \\
Z_4 \end{array} \) \sim \mathcal{N}(0,\Sigma)\,, \qquad \Sigma=\Sigma_{\eta}:=\( \begin{array}{ccc}
\frac{3+\eta}{8} & \frac{1-\eta}{8} & 0 \\
\frac{1-\eta}{8} & \frac{3+\eta}{8} & 0 \\
0 & 0 & \frac{1-\eta}{8} \end{array} \).
\end{equation}
%where
%$$
%\Sigma=\( \begin{array}{ccc}
%\frac{3+\eta}{8} & \frac{1-\eta}{8} & 0 \\
%\frac{1-\eta}{8} & \frac{3+\eta}{8} & 0 \\
%0 & 0 & \frac{1-\eta}{8} \end{array} \).
%$$
Note that $\eta\in[-1,1]$, and $\det (\Sigma)\ne 0$ if and only if $\eta \in (-1,1)$. Note that $\widetilde S_n$ introduced in Section \ref{ontheproof} coincides with 
$S_n/\sqrt{\mathcal N_{n}/2}$. 
%Abusing notation, we still denote 
%\begin{equation*}
% \Sigma=\( \begin{array}{ccc}
%\frac{1}{2} & 0 \\
%0 & \frac{1}{2} \\
% \end{array} \)
%\end{equation*}
%the non-singular $2\times 2$-matrix extracted from (\ref{conv-Sn}) for $\eta=1$, and for $\eta=-1$ 
%\begin{equation*}
% \Sigma=\( \begin{array}{ccc}
%\frac{1}{4} & 0 \\
%0 & \frac{1}{4} \\
% \end{array} \)
%\end{equation*}

Thanks to the G\"artner-Ellis Theorem (see point (c) of \cite[Theorem 2.3.6]{DZ:98} and Theorem \ref{GET}), in order to establish a MDP for $S_n$, it suffices 
to prove that, for suitable $\{\alpha_n\}_n$, there exists
$$
\lim_{n\sto\infty}\frac{1}{\alpha_n} \log\E\[\exp\(\alpha_n\left \langle\theta,\frac{S_n}{\sqrt{\alpha_n\mathcal N_n/2}}\right \rangle\)\]=: \psi(\theta)\,,
$$
where $\theta=(\theta_1,\theta_2,\theta_3)\in\R^3$ is a three-dimensional vector, and  that
\begin{equation}\label{psi}
\psi(\theta)=\frac12\langle\theta,\Sigma\theta\rangle\,.
\end{equation}
This is an easy task and it is immediate to verify that, if 
\begin{equation}\label{eps}
\alpha_{n}\to+\infty \qquad \text{and} \qquad \frac{\alpha_{n}}{\mathcal N_{n}}\to 0\,,
\end{equation}
then the random vector 
$$\left \lbrace \alpha_n^{-1/2}\frac{S_n}{\sqrt{\mathcal N_{n}/2}}\right \rbrace_n$$ 
satisfies a MDP with rate function 
\begin{equation}\label{ratefunction}
\psi^\ast(x)=\sup_{\theta\in\R^3}\{\langle\theta,x\rangle-\psi(\theta)\}, \quad x\in \mathbb R^3.
\end{equation} 
%which is the Legendre transform of $\psi$ (see, as already mentioned before, the statement of G\"artner-Ellis Theorem, in particular point (c) of \cite[Theorem 2.3.6]{DZ:98}). 
%Note that, for $\eta\ne -1, 1$, it holds
%$$
%\psi^\ast(x)=\frac12\langle x,\Sigma^{-1}  x\rangle,\quad x\in \mathbb R^3.
%$$
Indeed, setting for $\lambda\in \Lambda_n^+$,
$$
b_n(\lambda)= \( \begin{array}{c}
\lambda_1^2/n  \\
\lambda_2^2/n  \\
\lambda_1\lambda_2/n \end{array} \),
$$
%whenever $\frac{\langle\theta,b_{n_j}(\lambda)\rangle}{\sqrt{\mathcal N_{n_j} /(2\alpha_{n_j})}}<1$, 
one has that
\begin{flalign*}
&\frac{1}{\alpha_n} \log\E\[\exp\(\alpha_n\left \langle\theta,\frac{S_n}{\sqrt{\alpha_n\mathcal N_n/2}}\right \rangle\)\]\\
&=\frac{1}{\alpha_n}\sum_{\lambda\in\Lambda_{n_j}^+} \{\frac12\frac{\langle\theta,b_{n}(\lambda)\rangle^2}{{\mathcal N_n /(2\alpha_n)}}+\frac13\frac{\langle\theta,b_{n}(\lambda)\rangle^3}{(\mathcal N_n /(2\alpha_n))^{3/2}}+o\(\frac{\langle\theta,b_{n}(\lambda)\rangle^3}{(\mathcal N_n/(2\alpha_n))^{3/2}}\)\},
\end{flalign*}
where we used the fact that $2\cdot |a_\lambda|^2$ is distributed as a $\chi^2(2)$ random variable\footnote{Recall that $a_\lambda = b_\lambda + i c_\lambda$, where $b_\lambda$ and $c_\lambda$ are iid $\sim \mathcal N(0,1/2).$}.
So our goal becomes proving that
\begin{equation}\label{prova1}
\lim_{j\sto+\infty}\frac{1}{\mathcal N_{n_j}/2}\sum_{\lambda\in\Lambda_{n_j}^+}\langle\theta,b_{n_j}(\lambda)\rangle^2=\langle\theta,\Sigma\theta\rangle\,,
\end{equation}
and that
\begin{equation}\label{prova2}
\frac{1}{\alpha_n}\sum_{\lambda\in\Lambda_{n_j}^+} \frac{\langle\theta,b_{n}(\lambda)\rangle^3}{(\mathcal N_n /\alpha_n)^{3/2}} \to  0\,.
\end{equation}
As far as (\ref{prova1}) is concerned, given the fact that for every $n\in S$ (see \cite[Lemma 4.1]{MPRW:16})
$$
\frac{1}{n^2\mathcal N_{n}}\sum_{\lambda\in\Lambda_n}\lambda_1^4=\frac{1}{n^2\mathcal N_{n}}\sum_{\lambda\in\Lambda_n}\lambda_2^4= \frac{3+\widehat{\mu}_n(4)}{8}
\quad \text{ and } \quad
\frac{1}{n^2\mathcal N_{n}}\sum_{\lambda\in\Lambda_n}\lambda_1^2\lambda_2^2= \frac{1-\widehat{\mu}_n(4)}{8}\,,
$$
we simply have that
\begin{flalign*}
&\frac{1}{\mathcal N_{n}/2}\sum_{\lambda\in\Lambda_n^+}\langle\theta,b_n(\lambda)\rangle^2=\frac{1}{\mathcal N_n/2}\sum_{\lambda\in\Lambda_n^+}\(\theta_1\frac{\lambda_1^2}{n}+\theta_2\frac{\lambda_2^2}{n}+\theta_3\frac{\lambda_1\lambda_2}{n}\)^2\\
&=\theta_1^2\frac{3+\widehat{\mu}_n(4)}{8}
+\theta_2^2\frac{3+\widehat{\mu}_n(4)}{8}+\theta_3^2\frac{1-\widehat{\mu}_n(4)}{8}
+2\theta_1\theta_2\frac{1-\widehat{\mu}_n(4)}{8}
\end{flalign*}
since clearly
$\sum_{\lambda\in\Lambda_n}\lambda_1\lambda_2^3=\sum_{\lambda\in\Lambda_n}\lambda_1^3\lambda_2=0$.

As a consequence we just proved that
\begin{flalign*}
&\frac{1}{\mathcal N_{n}/2}\sum_{\lambda\in\Lambda_n^+}\langle\theta,b_n(\lambda)\rangle^2=\langle\theta,\Sigma_n\theta\rangle\,,
\end{flalign*}
where
$$
\Sigma_n=\( \begin{array}{ccc}
\frac{3+\widehat{\mu}_n(4)}{8} & \frac{1-\widehat{\mu}_n(4)}{8} & 0 \\
\frac{1-\widehat{\mu}_n(4)}{8} & \frac{3+\widehat{\mu}_n(4)}{8} & 0 \\
0 & 0 & \frac{1-\widehat{\mu}_n(4)}{8} \end{array} \).
$$
Therefore, since $\lim_{j\sto\infty}\widehat{\mu}_n(4)=\eta$, we obtain (\ref{prova1}).

As for (\ref{prova2}), we have 
\begin{flalign*}
\frac{1}{\alpha_{n_j}} \sum_{\lambda\in\Lambda_{n_j}^+} \frac{\langle\theta,b_{n_j}(\lambda)\rangle^3}{\({\mathcal N_{n_j}}/{\alpha_{n_j}}\)^{3/2}} &= \frac{1}{\,\mathcal N_{n_j}^{3/2}\alpha_{n_j}^{-1/2}}\sum_{\lambda\in\Lambda_{n_j}^+} {\langle\theta,b_{n_j}(\lambda)\rangle^3}
\le c \frac{\| \theta\|^3}{\,\mathcal N_{n_j}^{1/2}\alpha_{n_j}^{-1/2}}
%&=\frac{2\sqrt{2}}{3\,\mathcal N_{n_j}^{3/2}\alpha_{n_j}^{-1/2}}\sum_{\lambda\in\Lambda_{n_j}^+} {\(\theta_1\frac{\lambda_1^2}{n_j}+\theta_2\frac{\lambda_2^2}{n_j}+\theta_3\frac{\lambda_1\lambda_2}{n_j}\)^3}\,.
\end{flalign*}
(for some absolute constant $c>0$),
which tends to $0$ as $j\sto\infty$, since $\alpha_{n_j}$ is chosen as in \eqref{eps}.

%%%%%%%%%%%%%%%%%%%%%%%%%%%%%%%%%%%%%%%%%%%%%%%%%%%%%%%%%%%%

\subsubsection{Exponential equivalence}\label{step2}

In light of (\ref{4chaosred}) and (\ref{varesatta4chaos}), let us define, slightly abusing notation,
\begin{equation}\label{Mn}
\mathcal M_n= \mathcal M_{\widehat \mu_n(4)} :=\(1+\wh \mu_n (4)^2+\frac{34}{\mathcal N_n}\)^{-1/2}\((W_2(n)+W_3(n))^2-2W_2(n)^2-2W_3(n)^2-4W_4(n)^2\).
\end{equation}
\begin{lemma}\label{lemmaEE}
Consider as usual $\{n_j\}_j\subseteq S$ such that $\mathcal N_{n_j}\sto+\infty$ as $j\sto+\infty$ and 
assume that
$$
\alpha_{n_j} \to +\infty \quad \text{and} \quad \frac{\alpha_{n_j}}{\log \mathcal N_{n_j}}\to 0\,,
$$
then, for every $\delta>0$,
$$
\limsup_{j\sto+\infty}\frac1{\alpha_{n_j}} \log\P\(\alpha_{n_j}^{-1}\abs{\widetilde\Lc_{n_j}-\mathcal M_{n_j}}>\delta\)=-\infty\,,
$$
$\text{i.e.}$ $\alpha_{n_j}^{-1}\widetilde\Lc_{n_j}$ and $\alpha_{n_j}^{-1}\mathcal M_{n_j}$ are exponentially equivalent.
\end{lemma}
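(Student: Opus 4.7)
The plan is to split $\widetilde{\mathcal L}_n - \mathcal M_n$ into the two natural sources of error---the tail of the Wiener chaos expansion (together with the attendant variance mismatch) and the spurious $R(n)$ term inside $\mathcal L_n[4]$---and to bound each probability $\mathbb P(|\cdot|>\delta\alpha_n)$ by Chebyshev, exploiting that the associated $L^2$-norms decay as negative powers of $\mathcal N_n$ while the deviation scale grows like $\alpha_n$.

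Concretely, letting $\sigma_n := \sqrt{\mathrm{Var}(\mathcal L_n)}$, $\sigma_n^{(4)} := \sqrt{\mathrm{Var}(\mathcal L_n[4])}$, $\mathcal R_n := \mathcal L_n - \mathbb E[\mathcal L_n] - \mathcal L_n[4]$ and $\widetilde{\mathcal L}_n[4] := \mathcal L_n[4]/\sigma_n^{(4)}$, I would decompose
\[
\widetilde{\mathcal L}_n - \mathcal M_n \;=\; \underbrace{\frac{\mathcal R_n}{\sigma_n} + \widetilde{\mathcal L}_n[4]\!\left(\frac{\sigma_n^{(4)}}{\sigma_n} - 1\right)}_{=:\,D_n} \;+\; \underbrace{\bigl(\widetilde{\mathcal L}_n[4] - \mathcal M_n\bigr)}_{=:\,E_n}.
\]
Direct inspection of \eqref{4chaosred}--\eqref{varesatta4chaos} together with the definition \eqref{Mn} of $\mathcal M_n$ identifies $E_n = R(n)/\sqrt{1+\widehat\mu_n(4)^2 + 34/\mathcal N_n}$. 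For $D_n$, the bound \eqref{PR-bound} combined with $\sigma_n^2 \asymp E_n/\mathcal N_n^2$ yields $\mathbb E(\mathcal R_n/\sigma_n)^2 = O(\mathcal N_n^{-1/2})$, while orthogonality of Wiener chaoses gives $\sigma_n^2 - (\sigma_n^{(4)})^2 = \mathrm{Var}(\mathcal R_n)$, hence $|\sigma_n^{(4)}/\sigma_n - 1| = O(\mathcal N_n^{-1/2})$ and $\mathbb E D_n^2 = O(\mathcal N_n^{-1/2})$.

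The conceptually delicate point is $E_n$: since $R(n)\to 1$ in probability, $E_n$ does not tend to zero. However, exponential equivalence at speed $\alpha_n\to\infty$ requires only $E_n = o_{\mathbb P}(\alpha_n)$, not $o_{\mathbb P}(1)$. Because $R(n)\ge 0$ and $\alpha_n \to \infty$, the event $\{|E_n|>\delta\alpha_n/2\}$ is contained, for large $n$, in $\{|R(n)-1|>c_\eta\delta\alpha_n/2\}$ for some $c_\eta>0$; and a short computation using the independence of the $\{a_\lambda\}$ and the fact that $|a_\lambda|^2\sim\mathrm{Exp}(1)$ yields $\mathrm{Var}(R(n)) = O(1/\mathcal N_n)$.

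Chebyshev's inequality applied separately to $D_n$ and $E_n$ then produces estimates of the form $O(1/(\alpha_n^2\mathcal N_n^{1/2}))$ and $O(1/(\alpha_n^2\mathcal N_n))$, respectively; taking logarithms, dividing by $\alpha_n$ and using the hypothesis $\alpha_n/\log\mathcal N_n \to 0$ drives both to $-\infty$, and a union bound closes the argument. The only genuine obstacle is the non-vanishing bias carried by $E_n$: once one observes that the speed condition \eqref{cond:scaling} converts the polynomial-in-$\mathcal N_n$ decay of $\mathrm{Var}(R(n))$ into super-exponential decay at speed $\alpha_n$, this bias becomes harmless, and the whole proof reduces to three routine Chebyshev bounds.
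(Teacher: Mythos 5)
Your proof is correct and follows essentially the same approach as the paper: you split $\widetilde{\mathcal L}_n - \mathcal M_n$ into the chaos-tail/variance-mismatch piece and the $R(n)$ piece, bound each by Chebyshev using $\E[|\widetilde{\mathcal L}_n-\widetilde{\mathcal L}_n[4]|^2]=O(\mathcal N_n^{-1/2})$ (via \eqref{PR-bound}) and $\Var(R(n))=O(\mathcal N_n^{-1})$, and observe that the speed condition $\alpha_n/\log\mathcal N_n\to 0$ converts these polynomial decays into the required super-exponential decay at speed $\alpha_n$, while the $+1$ bias from $R(n)$ is harmless deterministically since $\alpha_n\to\infty$. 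The only cosmetic difference is that you unpack the variance-normalization mismatch explicitly inside $D_n$, whereas the paper absorbs it into the stated $L^2$ bound on $\widetilde{\mathcal L}_n-\widetilde{\mathcal L}_n[4]$, and you apply second-moment Chebyshev where the paper uses Markov followed by Cauchy--Schwarz; the substance is identical.
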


\proof 
From Lemma \ref{p:formula4} and (\ref{varesatta4chaos}) we can write 
\begin{flalign*}
\widetilde \Lc_n[4] %&=\(1+\wh \mu_n (4)^2+\frac{34}{\mathcal N_n}\)^{-1/2}\((W_2(n)+W_3(n))^2-2W_2(n)^2-2W_3(n)^2-4W_4(n)^2+R(n)\)\\
=\mathcal M_n +  \(1+\wh \mu_n (4)^2+\frac{34}{\mathcal N_n}\)^{-1/2}R(n)
\end{flalign*}
where we recall that
$$
R(n)=\frac12 \frac{1}{\mathcal N_n/2}\sum_{\lambda \in \Lambda_n^+} \abs{a_\lambda}^4
$$ 
is a sequence of (independent) random variables converging in probability to $1$ (note that $\mathbb E[R(n)]=1$). We have, for $n$ large enough,
\begin{flalign*}
&\P\(\alpha_{n}^{-1}\abs{\widetilde\Lc_{n}-\mathcal M_{n}}>\delta\) =\P\(\alpha_{n}^{-1} \abs{\widetilde\Lc_{n}-\widetilde\Lc_{n}[4] +  \(1+\wh \mu_n (4)^2+\frac{34}{\mathcal N_n}\)^{-1/2}R(n)} > \delta\)\\
&\le \P\(\alpha_{n}^{-1} \abs{\widetilde\Lc_{n}-\widetilde\Lc_{n}[4]} > \frac{\delta}{2}\) + \P\(\alpha_{n}^{-1}  \(1+\wh \mu_n (4)^2+\frac{34}{\mathcal N_n}\)^{-1/2}\left | R(n) - 1\right | > \frac{\delta}{4}\)
%&+ \P\(\alpha_{n}^{-1}  \(1+\wh \mu_n (4)^2+\frac{34}{\mathcal N_n}\)^{-1/2} > \frac{\delta}{4}\)\\
%=& \P\(\alpha_{n}^{-1} \abs{\widetilde\Lc_{n}-\widetilde\Lc_{n}[4]} > \frac{\delta}{2}\)\\
%& + \P\(\sum_{\lambda \in \Lambda_n^+} \abs{a_\lambda}^4 > \frac{\delta}{2}\alpha_{n}\mathcal N_n  \(1+\wh \mu_n (4)^2+\frac{34}{\mathcal N_n}\)^{\frac12}\).
\end{flalign*}
since $\alpha_{n}^{-1}  \(1+\wh \mu_n (4)^2+\frac{34}{\mathcal N_n}\)^{-1/2}\to 0$ as $\mathcal N_n\to +\infty$. 
As for the first summand, thanks to Markov inequality, we can write
\begin{flalign*}
\P\(\alpha_{n}^{-1}\abs{\widetilde\Lc_{n}-\mathcal M_{n}}>\delta/2\)&\le \frac{\alpha_{n}^{-1}}{\delta/2}\E\[ \abs{\widetilde\Lc_{n}-{\mathcal M}_{n}}	\]\\
&\le \frac{\alpha_{n}^{-1}}{\delta/2}\E\[ \abs{\widetilde\Lc_{n}-\widetilde\Lc_{n}[4]}^2 \]^{\frac12}
\end{flalign*}
and from \eqref{PR-bound} we have 
$$
\E\[ \abs{\widetilde\Lc_{n}-\widetilde\Lc_{n}[4]}^2\]=O\(\mathcal  N_n^{-1/2}\).
$$ Since $\alpha_n/\log \mathcal N_n \to 0$ we deduce 
\begin{flalign*}
&\limsup_{n\sto+\infty}\frac1{\alpha_{n}} \log\P\(\alpha_{n}^{-1}\abs{\widetilde\Lc_{n}-\wh{\mathcal M}_{n}}>\delta/2\) \le \limsup_{n\sto+\infty}\frac1{\alpha_{n}} \log\frac{\alpha_{n}^{-1}}{\delta/2}\E\[ \abs{\widetilde\Lc_{n}-\Lc_{n}[4]}^2 \]^\frac12 = -\infty. 
\end{flalign*}
Regarding the second summand, from \cite[(85)]{PR:18},
\begin{flalign*}
\P\( \left | R(n) - 1\right | > \frac{\delta}{4}\alpha_n \(1+\wh \mu_n (4)^2+\frac{34}{\mathcal N_n}\)^{1/2}\) &\le \frac{\Var(R(n))}{\alpha_n^2\delta^2/(16) \(1+\wh \mu_n (4)^2+\frac{34}{\mathcal N_n}\)}\\
&\ll \frac{1}{\alpha_n^2\mathcal N_n}
%&+ \P\(\alpha_{n}^{-1}  \(1+\wh \mu_n (4)^2+\frac{34}{\mathcal N_n}\)^{-1/2} > \frac{\delta}{4}\)\\
%=& \P\(\alpha_{n}^{-1} \abs{\widetilde\Lc_{n}-\widetilde\Lc_{n}[4]} > \frac{\delta}{2}\)\\
%& + \P\(\sum_{\lambda \in \Lambda_n^+} \abs{a_\lambda}^4 > \frac{\delta}{2}\alpha_{n}\mathcal N_n  \(1+\wh \mu_n (4)^2+\frac{34}{\mathcal N_n}\)^{\frac12}\).
\end{flalign*}
so that, since $\alpha_n/\log \mathcal N_n \to 0$ 
\begin{flalign*}
\limsup_{n\to +\infty} \frac{1}{n} \log \P\( \left | R(n) - 1\right | > \frac{\delta}{4}\alpha_n \(1+\wh \mu_n (4)^2+\frac{34}{\mathcal N_n}\)^{1/2}\) = -\infty\end{flalign*}
thus concluding the proof of the exponential equivalence.
\endproof

%%%%%%%%%%%%%%%%%%%%%%%%%%%%%%%%%%%%%%%%%%%%%%%%%%%%%%%%%%%%

\subsubsection{Contraction principle}\label{step3}

In light of the exponential equivalence obtained in the previous section, in order to obtain a MDP for the nodal length of arithmetic random waves, it suffices to prove the following. 
\begin{lemma}\label{lemmaM}
The sequence of random variables $\lbrace \alpha_n^{-1}\mathcal M_n\rbrace_n$, where $\mathcal N_n\to +\infty$ such that $\widehat \mu_n(4)\to \eta\in [-1,1]$, $\mathcal M_n$ is defined as in (\ref{Mn}), and $\lbrace \alpha_n\rbrace_n$ is a sequence of positive numbers satisfying (\ref{eps}),
enjoys a MDP with speed $\alpha_n$ and rate function
\begin{equation}\label{rfM}
I_\eta(y)=\begin{cases}
-y\frac{\sqrt{1 + \eta^2}}{1+|\eta|},\quad &y\le 0\\
+\infty,\quad &y>0.
\end{cases} 
\end{equation}
\end{lemma}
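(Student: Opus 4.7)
The plan is to recognize $\mathcal{M}_n$ as a continuous quadratic functional of $\widetilde S_n$ and push the MDP already obtained in Section \ref{step1} through the contraction principle. The algebraic starting point is the identity $(w_2+w_3)^2-2w_2^2-2w_3^2=-(w_2-w_3)^2$, so that the continuous, $2$-homogeneous polynomial
$$f(x_1,x_2,x_3):=-(x_1-x_2)^2-4x_3^2$$
allows one to rewrite \eqref{Mn} as $\mathcal{M}_n=c_n f(\widetilde S_n)$, with $c_n:=(1+\widehat\mu_n(4)^2+34/\mathcal{N}_n)^{-1/2}\to c_\eta:=(1+\eta^2)^{-1/2}$. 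By $2$-homogeneity, $\alpha_n^{-1}\mathcal{M}_n=c_n f(\alpha_n^{-1/2}\widetilde S_n)$, which is precisely the form needed to apply contraction.

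The first step is to apply the contraction principle (Theorem \ref{contractionprinciple}) to the continuous map $c_\eta f:\mathbb R^3\to\mathbb R$ and the MDP satisfied by $\alpha_n^{-1/2}\widetilde S_n$ at speed $\alpha_n$ with rate function $\psi^*$ from \eqref{ratefunction}, obtaining an MDP for $Y_n:=c_\eta f(\alpha_n^{-1/2}\widetilde S_n)$ with rate function $I_\eta(y)=\inf\{\psi^*(x):c_\eta f(x)=y\}$. The explicit computation of this infimum is the main combinatorial step. Since $\Sigma_\eta$ is block-diagonal, the orthogonal change of variables $u=(x_1+x_2)/\sqrt2$, $v=(x_1-x_2)/\sqrt2$, $w=x_3$ simultaneously diagonalizes $\psi^*$ and $f$, yielding
$$\psi^*(u,v,w)=u^2+\frac{2v^2}{1+\eta}+\frac{4w^2}{1-\eta},\qquad c_\eta f(u,v,w)=-c_\eta(2v^2+4w^2),$$
with the convention at $\eta=\pm 1$ that an infinite coefficient forces the corresponding coordinate to vanish. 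For $y>0$ the constraint $c_\eta f(x)=y$ is infeasible (as $f\le 0$), giving $I_\eta(y)=+\infty$. For $y\le 0$ the optimum is at $u=0$, and setting $A=2v^2$, $B=4w^2\ge 0$ with $A+B=-y/c_\eta=-y\sqrt{1+\eta^2}$ reduces the problem to minimizing the linear functional $A/(1+\eta)+B/(1-\eta)$ on that segment, whose minimum is attained at the endpoint with the smaller coefficient and equals $-y\sqrt{1+\eta^2}/(1+|\eta|)$, matching \eqref{rfM}.

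The final step is to transfer the MDP from $Y_n$ to $\alpha_n^{-1}\mathcal{M}_n$ via exponential equivalence (Definition \ref{expdef} and Theorem \ref{thDZ}); this is the step where I anticipate the main subtlety, because the polynomial (hence unbounded) map $f$ and the $n$-dependent constant $c_n$ prevent a direct application of contraction. The difference is $\alpha_n^{-1}\mathcal{M}_n-Y_n=(c_n-c_\eta)f(\alpha_n^{-1/2}\widetilde S_n)$. For any $\delta,C>0$, since $|c_n-c_\eta|\to 0$ we eventually have $|c_n-c_\eta|<\delta/C$, and therefore
$$\{|\alpha_n^{-1}\mathcal{M}_n-Y_n|>\delta\}\subseteq\{|f(\alpha_n^{-1/2}\widetilde S_n)|>C\}.$$
Applying the contraction principle also to the continuous map $-f\ge 0$ yields an MDP for $-f(\alpha_n^{-1/2}\widetilde S_n)$ with rate function $y\mapsto y\sqrt{1+\eta^2}/(1+|\eta|)$ on $[0,+\infty)$ (and $+\infty$ on $(-\infty,0)$), hence
$$\limsup_{n\to\infty}\alpha_n^{-1}\log\P\bigl(|f(\alpha_n^{-1/2}\widetilde S_n)|>C\bigr)\le -\,C\,\frac{\sqrt{1+\eta^2}}{1+|\eta|}.$$
Letting $C\to\infty$ gives the desired $-\infty$, so $\alpha_n^{-1}\mathcal{M}_n$ and $Y_n$ are exponentially equivalent and Theorem \ref{thDZ} transfers the MDP with rate function $I_\eta$ from $Y_n$ to $\alpha_n^{-1}\mathcal{M}_n$, completing the proof of the lemma.
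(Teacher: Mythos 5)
Your proposal is correct and establishes the lemma, but it takes a genuinely different route from the paper in two places, both arguably cleaner. First, to compute the contracted rate function $\inf\{\psi^*(x): f(x)=y\}$, the paper writes out $\Sigma^{-1}$ (or the restricted inverse when $\eta=\pm 1$) and runs the Lagrange-multiplier system explicitly in each case, whereas you observe that the orthogonal change of variables $u=(x_1+x_2)/\sqrt 2$, $v=(x_1-x_2)/\sqrt 2$, $w=x_3$ simultaneously diagonalizes $\psi^*$ into $u^2+\tfrac{2v^2}{1+\eta}+\tfrac{4w^2}{1-\eta}$ and $f$ into $-(2v^2+4w^2)$, reducing the constrained minimization to the transparent endpoint optimization of a linear functional over a segment, with the degenerate cases $\eta=\pm 1$ absorbed as ``infinite coefficient forces that coordinate to vanish.'' Second, the paper handles the $n$-dependent prefactor $c_n=(1+\widehat\mu_n(4)^2+34/\mathcal N_n)^{-1/2}\to(1+\eta^2)^{-1/2}$ only by citing ``standard computations'' and an external lemma from \cite{GT:08}; you instead first contract through the fixed map $c_\eta f$ and then prove exponential equivalence of $\alpha_n^{-1}\mathcal M_n$ with $Y_n=c_\eta f(\alpha_n^{-1/2}\widetilde S_n)$ by bounding $\P(|f(\alpha_n^{-1/2}\widetilde S_n)|>C)$ via the MDP upper bound for $-f$ and letting $C\to\infty$, which is more self-contained. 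One small slip: the contracted rate function for the map $-f$ (without the constant $c_\eta$) is $y\mapsto y/(1+|\eta|)$ on $[0,\infty)$, not $y\sqrt{1+\eta^2}/(1+|\eta|)$ as you wrote — you carried along the $c_\eta$ factor from the earlier computation — but this is harmless since either constant still drives the upper bound to $-\infty$ as $C\to\infty$, so the exponential equivalence and the conclusion are unaffected.
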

\begin{proof}
First of all we note that 
\begin{eqnarray}\label{alphaM}
&\alpha_n^{-1}\mathcal M_n = \(1+\wh \mu_n (4)^2+\frac{34}{\mathcal N_n}\)^{-1/2}f\(\frac{S_n}{\sqrt{\alpha_n\mathcal N_{n}/2}}\)\nonumber\\
&=\(1+\wh \mu_n (4)^2+\frac{34}{\mathcal N_n}\)^{-1/2} f\(\alpha_n^{-1/2}\(W_2(n),W_3(n),W_4(n)\)^\prime\),
\end{eqnarray} 
where $S_n$ is defined as in (\ref{defSn}) and $f:\mathbb R^3\to \mathbb R$ is given by 
\begin{flalign*}
f(x_1,x_2,x_3)&=(x_1+x_2)^2-2x_1^2-2x_2^2-4x_3^2\\
%&=-x_1^2-x_2^2+2x_1x_2-4x_3^2\\
&=-(x_1-x_2)^2-4x_3^2 \,.
\end{flalign*}
Thanks to the contraction principle (see \cite[Theorem 4.2.1]{DZ:98} and Theorem \ref{contractionprinciple}), the sequence of random variables 
$$\left \lbrace f\(\frac{S_n}{\sqrt{\alpha_n\mathcal N_{n}/2}}\)\right \rbrace_n$$ 
satisfies a LDP with speed  $\alpha_n$ and rate function 
$$
I_f(y)=\inf\{\psi^\ast(x):\,\,f(x)=y\}, \quad y\in \mathbb R\,,
$$
where $\psi^\ast$ is defined as in (\ref{ratefunction}).  Since $f$ takes non-positive values,
$$
I_f(y)=+\infty \qquad \text{whenever}  \quad y>0
$$
and 
$$
I_f(0)=0 \qquad \text{since} \quad \psi^\ast(x)=0 \iff x=0 \quad \text{and} \quad f(0)=0.
$$
We are going to explicitly compute $I_f(y)$ for $y < 0$. Assume that $\eta\in (-1,1)$, then the matrix $\Sigma$ in \eqref{conv-Sn}  is non-singular and 
$$
I_f(y)=\inf\{\psi^\ast(x):\,\,f(x)=y\}=\inf\{\frac12\langle x,\Sigma^{-1}  x\rangle:\,\,f(x)=y\}.
$$
Now 
$$
\Sigma^{-1}=\frac{8}{1+\eta}\( \begin{array}{ccc}
\frac{3+\eta}{8} & \frac{\eta-1}{8} & 0 \\
\frac{\eta-1}{8} & \frac{3+\eta}{8} & 0 \\
0 & 0 & \frac{1+\eta}{1-\eta} \end{array} \),
$$
and therefore
$$
\langle x,\Sigma^{-1}x\rangle=\frac{8}{1+\eta}\(\frac{3+\eta}{8}(x_1^2+x_2^2)+\frac{\eta-1}{4}x_1x_2+\frac{1+\eta}{1-\eta}x_3^2\)\,.
$$
As a consequence,
$$
\psi^\ast(x)=\frac12\langle x,\Sigma^{-1}  x\rangle=\frac{4}{1+\eta}\(\frac{3+\eta}{8}(x_1^2+x_2^2)+\frac{\eta-1}{4}x_1x_2+\frac{1+\eta}{1-\eta}x_3^2\)
$$
and
\begin{flalign}
I_f(y)&=\inf\{\frac{4}{1+\eta}\(\frac{3+\eta}{8}(x_1^2+x_2^2)+\frac{\eta-1}{4}x_1x_2+\frac{1+\eta}{1-\eta}x_3^2\):\,-x_1^2-x_2^2+2x_1x_2-4x_3^2=y\}\,. \label{inf-eta}
\end{flalign}
%Let us assume for the moment that $\eta=0$, then \eqref{inf-eta} becomes
%\begin{flalign*}
%I_f(y)%&=\inf\{\frac32(x_1^2+x_2^2)-x_1x_2+4x_3^2:\,-x_1^2-x_2^2+2x_1x_2-4x_3^2=y\}\\
%%&=\inf\{\frac32(x_1^2+x_2^2)-x_1x_2+4x_3^2:\,4x_3^2=-x_1^2-x_2^2+2x_1x_2-y\}\\
%%&=\inf\{\frac32(x_1^2+x_2^2)-x_1x_2  -x_1^2-x_2^2+2x_1x_2-y\}\\
%%&=\inf\{\frac12x_1^2+\frac12x_2^2+x_1x_2-y\}\\
%&=\inf\{\frac12(x_1+x_2)^2-y\}=-y\,,
%\end{flalign*}
%so we just obtained that for $\eta=0$, the rate function takes the form $I_f(y)=-y$, coherently with (\ref{rfM}).
Let us now compute $I_f(y)$ for any $\eta\in(-1,1)$. We will use the Lagrange multipliers method with
$$
L(x_1,x_2,x_3,\lambda)=h(x_1,x_2,x_3)+\lambda g(x_1,x_2,x_3)
$$
where
\begin{flalign*}
h(x_1,x_2,x_3)&=\frac{3+\eta}{2(1+\eta)}(x_1^2+x_2^2)+\frac{(\eta-1)}{1+\eta}x_1x_2+\frac{4}{1-\eta}x_3^2\\
g(x_1,x_2,x_3)&=-x_1^2-x_2^2+2x_1x_2-4x_3^2-y\,.
\end{flalign*}
Then, recalling that we take $y<0$, we have the system
\begin{flalign*}
\begin{cases}
h_{x_1}+\lambda g_{x_1}=0\\
h_{x_2}+\lambda g_{x_2}=0\\
h_{x_3}+\lambda g_{x_3}=0\\
g(x_1,x_2,x_3)=0\textcolor{red}{,}
\end{cases}
\text{ which yields } \,\,
%\begin{cases}
%\ds \frac{3+\eta}{1+\eta}x_1+\frac{(\eta-1)}{1+\eta}x_2+\lambda(-2x_1+2x_2)=0\\[10pt]
%\ds  \frac{3+\eta}{1+\eta}x_2+\frac{(\eta-1)}{1+\eta}x_1+\lambda(-2x_2+2x_1)=0\\[10pt]
%\ds \frac{8}{1-\eta}x_3+\lambda (-8x_3)=0\\[10pt]
%g(x_1,x_2,x_3)=0
%\end{cases}
%\end{flalign*}
%\begin{flalign*}
%\implies
%\begin{cases}
%\ds \frac{3+\eta}{1+\eta}x_1+\frac{(\eta-1)}{1+\eta}x_2+\lambda(-2x_1+2x_2)=0\\[10pt]
%\ds  x_1=-x_2\\[10pt]
%\ds x_3=0 \vee \lambda=\frac{1}{1-\eta}\\[10pt]
%g(x_1,x_2,x_3)=0
%\end{cases}
%\implies
%\begin{cases}
%\ds x_2=0 \vee \lambda=\frac{1}{1+\eta}\\[10pt]
%\ds  x_1=-x_2\\[10pt]
%\ds x_3=0 \vee \lambda=\frac{1}{1-\eta}\\[10pt]
%-4x_2^2-4x_3^2=y
%\end{cases}
%\end{flalign*}
%\begin{flalign*}
%\implies&
\begin{cases}
\ds\lambda=\frac{1}{1+\eta}\\[10pt]
\ds  x_1=\mp\sqrt{-\frac{y}{4}}\\[10pt]
\ds  x_2=\pm\sqrt{-\frac{y}{4}}\\[10pt]
x_3=0
\end{cases}
%\vee
%\qquad
%\begin{cases}
%\ds\lambda=\frac{1}{1+\eta}\\[10pt]
%\ds  x_1=\sqrt{-\frac{y}{4}}\\[10pt]
%\ds  x_2=-\sqrt{-\frac{y}{4}}\\[10pt]
%x_3=0
%\end{cases}
\text{ or }
\qquad
\begin{cases}
\ds\lambda=\frac{1}{1-\eta}\\[10pt]
\ds x_1=0\\[10pt]
\ds x_2=0\\[10pt]
\ds x_3=\pm\sqrt{-\frac{y}{4}}\textcolor{red}{.}
\end{cases}
\end{flalign*}
For the first two solutions we have that
$$
\psi^\ast\(\sqrt{-\frac{y}{4}}, -\sqrt{-\frac{y}{4}},0\)=\psi^\ast\(-\sqrt{-\frac{y}{4}},\sqrt{-\frac{y}{4}},0\)=\frac{-y}{1+\eta}\,,
$$
while for the other two solutions we have that
$$
\psi^\ast\(0,0,\pm\sqrt{-\frac{y}{4}}\)=\frac{-y}{1-\eta}\,.
$$
As a consequence, the rate function is given by 
$$
I_f(y)=\frac{-y}{1+\abs{\eta}}\,,  \quad \text{ for } \quad y\le0 \quad\text{ and } \quad \eta\in(-1,1)\,.
$$ 
%which coincides with (\ref{rfM}). 

If $\eta\in \{-1,1\}$, then $\Sigma$ is singular. In this case, in order to  compute  $\psi^\ast(x)$ one has to consider the gradient with respect to $\theta$ to obtain the supremum in \eqref{ratefunction} and it is a known fact that one obtains $\Sigma\theta=x$. Then there are  two possibilities:
\begin{itemize}
\item if $x\notin \Ima(\Sigma)$, then $\Sigma\theta=x$ has no solution, and therefore $\psi^\ast(x)=+\infty$;
\item if $x\in \Ima(\Sigma)$, then $\Sigma\theta=x$ has solution $\theta=\wt\Sigma^{-1}x$ where $\wt\Sigma^{-1}$ is the inverse of $\Sigma$ restricted to $\Ima(\Sigma)$, and therefore $\psi^\ast(x)=\[\langle\theta,x\rangle-\frac12\langle\theta,\Sigma\theta\rangle\]_{\theta=\wt\Sigma^{-1}x}=\frac12\langle x,\wt\Sigma^{-1}x\rangle$.
\end{itemize}
Then, considering the case when $\eta=1$, we have:
$$
\Sigma=\( \begin{array}{ccc} 
\frac12 & 0 & 0 \\
0 & \frac12 & 0 \\
0 & 0 & 0 \end{array} \)
$$
and hence $\Ima(\Sigma)=\{(x_1,x_2,x_3):\,x_3=0\}$. As a consequence, for $x\in\Ima(\Sigma)$, we have that
$$
\( \begin{array}{ccc} 
\frac12 & 0 & 0 \\
0 & \frac12 & 0 \\
0 & 0 & 0 \end{array} \)\( \begin{array}{c} 
\theta_1 \\
\theta_2 \\
\theta_3 \end{array} \)=\( \begin{array}{c} 
x_1 \\
x_2 \\
0 \end{array} \),\text{ which yields   }\,\,
%\begin{cases}\frac12\theta_1=x_1\\  \frac12\theta_2=x_2\\\theta_3=0 \end{cases}\implies \begin{cases} \theta_1=2x_1\\  \theta_2=2x_2\\\theta_3=0 \end{cases}\implies 
\theta=\( \begin{array}{ccc} 
2 & 0 & 0 \\
0 & 2 & 0 \\
0 & 0 & 0 \end{array} \)x,
$$
and hence
$$
\psi^\ast(x)=\frac12\langle x,\wt\Sigma^{-1}x\rangle=x_1^2+x_2^2.
$$
So what we have to compute now is the following rate function
\begin{flalign*}
I_f(y)&=\inf_{x\in\Ima(\Sigma)}\{\frac12\langle x,\wt\Sigma^{-1}  x\rangle:\,\,f(x)=y\}=\inf_{x\in\Ima(\Sigma)}\{x_1^2+x_2^2:\,-x_1^2-x_2^2+2x_1x_2=y\}.
\end{flalign*}
We use again the Lagrange method. We have the system
$$
\begin{cases}
2x_1+\lambda(-2x_1+2x_2)=0\\
2x_2+\lambda(-2x_2+2x_1)=0\\
-x_1^2-x_2^2+2x_1x_2-y=0\textcolor{red}{,}
\end{cases}
%\implies
%\begin{cases}
%x_1=-x_2\\
%2x_2+\lambda(-2x_2+2x_1)=0\\
%(x_1-x_2)^2=-y
%\end{cases}
%\implies
%\begin{cases}
%x_1=-x_2\\
%\lambda=\frac12\\
%4x_1^2=-y
%\end{cases}
%$$
%$$
\text{which yields } \begin{cases}
x_1=\pm\sqrt{-\frac y4}\\
x_2=\mp\sqrt{-\frac y4}\\
\lambda=\frac12
\end{cases}
%\vee \quad
%\begin{cases}
%x_1=-\sqrt{-\frac y4}\\
%x_2=\sqrt{-\frac y4}\\
%\lambda=\frac12
%\end{cases}
\text{  and } \psi^\ast\(\pm \sqrt{-\frac y4}, \mp \sqrt{-\frac y4}, 0\)=-\frac y 2\,.
$$
Thus, for $\eta=1$, the rate function is $I_f(y)=-y/2$. %which is (\ref{rfM}).

%%%%%%%%%%%%%%%

Let us now consider the case $\eta=-1$. We have
$$
\Sigma=\( \begin{array}{ccc} 
\frac14 & \frac14 & 0 \\
\frac14 & \frac14 & 0 \\
0 & 0 & \frac14 \end{array} \)
$$
and hence $\Ima(\Sigma)=\{(x_1,x_2,x_3):\,x_1=x_2\}$. As a consequence, for $x\in\Ima(\Sigma)$,
$$
\( \begin{array}{ccc} 
\frac14 & \frac14 & 0 \\
\frac14 & \frac14 & 0 \\
0 & 0 & \frac14 \end{array} \)\( \begin{array}{c} 
\theta_1 \\
\theta_2 \\
\theta_3 \end{array} \)=\( \begin{array}{c} 
x_1 \\
x_1 \\
x_3 \end{array} \)\text{which yields } \begin{cases}\frac14\theta_1+\frac14\theta_2=x_1\\  \frac14\theta_1+\frac14\theta_2=x_1\\\frac14\theta_3=x_3 \end{cases}\text{which yields } \begin{cases} \theta_1+\theta_2=4x_1\\  \theta_1+\theta_2=4x_1\\\theta_3=4x_3 \end{cases}
$$
and therefore
\begin{flalign*}
\psi^\ast(x_1,x_1,x_3)&=\[\langle\theta,x\rangle-\frac12\langle\theta,\Sigma\theta\rangle\]_{\Sigma\theta=x,x_1=x_2}%\\
%&=\theta_1x_1+\theta_2x_1+\theta_3x_3-\frac12\(\theta_1\(\frac14\theta_1+\frac14\theta_2\)+\theta_2\(\frac14\theta_1+\frac14\theta_2\)+\theta_3\frac14\theta_3\)\Bigg\vert_{\Sigma\theta=x}\\
%&
=2x_1^2+2x_3^2.
\end{flalign*}
So what we have to compute now is the following rate function
\begin{flalign*}
I_f(y)&=\inf_{x\in\Ima(\Sigma)}\{2x_1^2+2x_3^2:\,-4x_3^2=y\}.
\end{flalign*}
We use again the Lagrange method. We have the system
$$
\begin{cases}
4x_1+\lambda\cdot0=0\\
4x_3+\lambda(-8x_3)=0\\
-4x_3^2=y\textcolor{red}{,}
\end{cases}
\text{which yields }
\begin{cases}
x_1=0\\
x_3=\pm\sqrt{-\frac y 4}\\
\lambda=
\end{cases}
\text{  and } \quad \psi^\ast\(0,0,\pm \sqrt{-\frac y4}\)=-\frac y 2\,.
$$
Thus, for $\eta=-1$, the rate function is $I_f(y)=-y/2$. \\

Finally, recalling (\ref{alphaM}), since as $n\sto +\infty$
$$
\left ( 1 + \widehat \mu_n(4)^2 + \frac{34}{\mathcal N_n}\right ) \to 1 + \eta^2,
$$
one can prove the desired LDP of $\lbrace \alpha_n^{-1} \mathcal
M_n\rbrace_n$, with the rate function in (\ref{rfM}), with some standard
computations (in particular, for the proof of the lower bound for open sets,
one can use Lemma 19 in \cite{GT:08}).
\end{proof}\\

\begin{proof}[Proof of Theorem   \ref{MR}]
Bearing in mind the result of Sections \ref{step1}--\ref{step3}, the proof of Theorem \ref{MR} immediately follows. 
Indeed, from Lemma \ref{lemmaEE} the two sequences of random variables 
$
\{\widetilde\Lc_{n}/\alpha_{n}\}_n$ and $\{\mathcal M_{n}/\alpha_{n}\}_n$  are exponential equivalent at speed $\alpha_n$. From Theorem \ref{thDZ} and Lemma \ref{lemmaM}, $
\{\widetilde\Lc_{n}/\alpha_{n}\}_n$ enjoys a MDP with speed $\alpha_n$ and rate function (\ref{rfM}). 
\end{proof}

\subsection{Proof of Theorem \ref{MR-shrinking}}

\begin{proof}
The proof consists in showing that $\alpha_n^{-1}\widetilde\Lc_{n;s}$ and $\alpha_n^{-1} \widetilde\Lc_{n}$ are exponentially equivalent at speed $\alpha_n$: this guarantees that they enjoy the same MDP as in Theorem \ref{MR}. In order to do that, it suffices to find an upper bound for
\begin{flalign*}
\E\[\abs{\widetilde\Lc_{n;s}-\widetilde\Lc_{n}}^2\]=1+1-2 \, \Corr(\Lc_{n;s},\Lc_{n}).
\end{flalign*}
%Following the proof of \cite[Theorem 1.5]{BMW:20}, we have that
%\begin{flalign*}
%\Var\(\Lc_{n;s}\)&=\frac{E_n}2\int_{B(s)\times B(s)}\(K_2(x-y)-\frac14\) dxdy\\
%&=\frac{E_n}2 \int_{B(s)\times B(s)}\(L_2(x-y)+\epsilon(x-y)\) dxdy+O\(s^4\frac{E_n}{\mathcal N_n^{5/2}}\)\\
%&=\frac{E_n}2 (\pi s^2)^2\(\frac{2c_n}{\mathcal N_n^2}+O\(\frac{1}{\mathcal N_n^{5/2}}\)\)+O\(s^4\frac{E_n}{\mathcal N_n^{5/2}}\)\\
%&=(\pi s^2)^2 \(\frac{E_n}{\mathcal N_n^2} c_n+O_{\eps}\(\frac{E_n}{\mathcal N_n^{5/2}}\)\)
%\end{flalign*}
%and hence that
Recalling (\ref{eq:var leading KKW}), (\ref{eq:Var asympt gen}) and the precise identity \cite[(3.36)]{BMW:20}, we can write, uniformly for $s> n^{-\frac12 +\eps}$,
\begin{flalign*}
\Corr(\Lc_{n;s},\Lc_{n})&=\frac{\Cov(\Lc_{n;s},\Lc_{n})}{\sqrt{\Var\(\Lc_{n;s}\)\Var\(\Lc_{n}\)}}=	\frac{\pi s^2 \Var\(\Lc_{n}\)}{\sqrt{\Var\(\Lc_{n;s}\)\Var\(\Lc_{n}\)}}\\
&=	\frac{\pi s^2 \sqrt{\Var\(\Lc_{n}\)}}{\sqrt{\Var\(\Lc_{n;s}\)}}=\frac{\pi s^2 \sqrt{\frac{E_n}{\mathcal N_n^2} c_n+O\(\frac{E_n}{\mathcal N_n^{5/2}}\)}}{\sqrt{(\pi s^2)^2 \(\frac{E_n}{\mathcal N_n^2} c_n+O_{\eps}\(\frac{E_n}{\mathcal N_n^{5/2}}\)\)}}\\
%&=\frac{\sqrt{\frac{E_n}{\mathcal N_n^2} c_n+O\(\frac{E_n}{\mathcal N_n^{5/2}}\)}}{\sqrt{\frac{E_n}{\mathcal N_n^2} c_n+O_{\eps}\(\frac{E_n}{\mathcal N_n^{5/2}}\)}}=\frac{\sqrt{\frac{E_n}{\mathcal N_n^2} c_n+O\(\frac{E_n}{\mathcal N_n^{5/2}}\)}}{\sqrt{\frac{E_n}{\mathcal N_n^2} c_n}}\\
&=1+O_\eps\(\mathcal N_n^{-1/2}\)\,,
\end{flalign*}
where the constant involved in the $O$-notation only depends on $\eps$. 
As a  consequence, we have that
\begin{equation}\label{boundAnna}
\E\[\abs{\widetilde\Lc_{n;s}-\widetilde\Lc_{n}}^2\]=O_\eps\(\mathcal N_n^{-1/4}\)\,.
\end{equation}
Now, for every $\delta>0$,  thanks to (\ref{boundAnna}) we have 
\begin{flalign*}
&\limsup_{n\sto+\infty}\frac1{\alpha_n} \log\P\(\alpha_n^{-1}\abs{\widetilde\Lc_{n;s}-\widetilde\Lc_{n}}>\delta\) \le \limsup_{n\sto+\infty}\frac1{\alpha_n} \log \frac{\alpha_n^{-2}}{\delta^2}\E\[\abs{\widetilde\Lc_{n;s}-\widetilde\Lc_{n}}^2\] = -\infty
\end{flalign*}
and the proof is hence concluded.
\end{proof}

\subsection*{Acknowledgements} 
The authors wish to thank an anonymous referee for his/her suggestions that greatly improved the presentation of this paper. 

C.~M. has been supported by the MIUR Excellence Department Project awarded to the Department of Mathematics, 
University of Rome ``Tor Vergata'' (CUP E83C18000100006 and CUP E83C23000330006), by
University of Rome ``Tor Vergata'' (``Asymptotic Properties in Probability'' (CUP E83C22001780005)) and by the GNAMPA-INdAM 
Project \emph{Stime asintotiche: principi di invarianza e grandi deviazioni}. M.~R. has been supported by the 
ANR-17-CE40-0008 project \emph{Unirandom}. A.~V. has been supported by the co-financing of the European Union - FSE-REACT-EU, 
PON Research and Innovation 2014--2020, DM 1062/2021.

 \bibliographystyle{alpha}

\end{document}